\newcommand{\bogus}[1]{}
\newtheorem{theorem}{Theorem}
\newcommand{\das}[1]{{{#1}}}
\begin{document}

\begin{frontmatter}



\title{Structure-Preserving Neural Ordinary Differential Equations for Stiff Systems}

\author[lanl]{Allen Alvarez Loya\fnref{DOEThanks,NSFThanks}} 
\ead{aalvarezloya@lanl.gov}

\author[lanl]{Daniel A.~Serino\fnref{DOEThanks}} 
\ead{dserino@lanl.gov}

\author[ut]{J.~W.~Burby\fnref{DOEThanks}} 
\ead{joshua.burby@austin.utexas.edu}

\author[gt]{Qi Tang\corref{cor1}\fnref{DOEThanks}}
\ead{qtang@gatech.edu}

\address[lanl]{Theoretical Division, Los Alamos National Laboratory, Los Alamos, NM}
\address[ut]{Department of Physics, University of Texas at Austin, Austin, TX}
\address[gt]{School of Computational Science and Engineering, Georgia Institute of Technology, Atlanta, GA}

\fntext[DOEThanks]{This work was partially supported by the U.S. Department of Energy Advanced Scientific Computing Research (ASCR) under DE-FOA-2493 “Data-intensive scientific machine learning”. It was also partially
supported by the ASCR program of Mathematical Multifaceted Integrated Capability Center (MMICC).}

\fntext[NSFThanks]{Alvarez Loya is supported by the U.S. National Science Foundation under NSF-DMS-2213261.}

\cortext[cor1]{Corresponding author: {\tt qtang@gatech.edu}.}

\begin{abstract}
Neural ordinary differential equations (NODEs)
are an effective approach for data-driven modeling of
dynamical systems arising from  simulations and experiments.
One of the major shortcomings of NODEs,
especially
when coupled with explicit integrators,
is its long-term stability,
which impedes their efficiency and robustness when encountering stiff problems. 
In this work we present a structure-preserving NODE approach that learns a transformation into a system with a linear and nonlinear split.
It is then integrated using an exponential integrator, 
which is an explicit integrator with stability properties comparable to implicit methods. 
We demonstrate that our model has advantages in both learning and deployment over standard explicit or even implicit NODE methods.
The long-time stability is further enhanced by the Hurwitz matrix decomposition that constrains the spectrum of the linear operator, therefore stabilizing the linearized dynamics.
When combined with a Lipschitz-controlled neural
network treatment for the nonlinear operator,
we show the nonlinear dynamics of the NODE are provably stable near a fixed point in the sense of Lyapunov.
For high-dimensional data, we further rely on an autoencoder performing dimensionality reduction
and Higham's algorithm for the matrix-free application of the matrix exponential on a vector.
We demonstrate the effectiveness of
the proposed NODE approach in various examples,
including the Robertson chemical reaction problem and the Kuramoto-Sivashinky equation.
\end{abstract}

\begin{keyword}
Structure-Preserving Machine Learning \sep
Neural Ordinary Differential Equations \sep 
Stiff equation \sep
Exponential Integrator \sep Model Reduction

\end{keyword}

\end{frontmatter}

\section{Introduction}
Data-driven reduced order models (ROMs) of dynamical
systems arising from high-resolution simulations or experiments 
can be used as efficient forward-model surrogates.
The learned surrogates can be deployed in  the traditionally expensive tasks of 
optimization and parameter inference. 
However, conventional data-driven ROMs for dynamical systems generally struggle with 
accurately approximating stiff dynamical systems \cite{kim2021stiff}
and maintaining long-term stability \cite{linot2023stabilized, serino2024intelligent}.
In this work, we address both of these challenges
in the context of neural ordinary
differential equations (NODEs) \cite{chen2018neural}, which are a class of
models that approximate dynamical systems continuously as
ODEs using neural networks.
The two ingredients that determine
the effectiveness of NODE-based models 
are the neural network architecture
and the numerical integrator. For the former we proposed a structure-preserving NODE, which integrates with a linear and nonlinear split and several small components that preserve key mathematical structures strongly.
For the latter we incorporate NODE with advanced time stepping through exponential integrators. 
The use of an exponential integrator improves the performance of explicit and semi-implicit integrators that 
are commonly used in packages such as diffrax \cite{kidger2021on} and torchode \cite{lienen2022torchode}.

In \cite{linot2023stabilized}, motivated by the inertial manifold theorem, we developed a special NODE approach that embeds a linear and nonlinear partition of the right-hand side (RHS) being approximated by two neural networks. This partitioning method provides better short-time tracking, prediction of the energy spectrum, and robustness to noisy initial conditions than standard NODEs. The recent work \cite{zhang2023semi,zhang2024semi} extended the linear and nonlinear partition to a semi-implicit NODE. In the forward pass of their training they solve a partitioned ODE using implicit-explicit Runge-Kutta (IMEX-RK) methods. 
The linear and nonlinear partition allows them to avoid solving a fully nonlinear system. The backward pass is handled through a discrete adjoint approach, resulting into a linear system to be solved. In general, the matrices from these linear systems are dense, requiring a direct solve which can be computationally expensive and have poor scalability. Our NODE uses an explicit approach based on exponential integrators, which do not require direct solvers but enjoy stability properties comparable to implicit approaches. 
To the best of our knowledge, exponential integration schemes have first 
been incorporated into NODEs in recent work~\cite{fronk2024training}
The authors show that exponential integrators can outperform implicit schemes for low-dimensional ODEs. 
However, this work shows the straightforward implementation of exponential integrators for high-dimensional systems to be too computationally expensive for practical purposes. 
Our work relies on several key components to overcome the challenge of training when applying such an exponential integrator with NODE to high-dimensional problems.

\begin{figure}[tb]
    \centering
\includegraphics[page=1,width=0.9\textwidth]{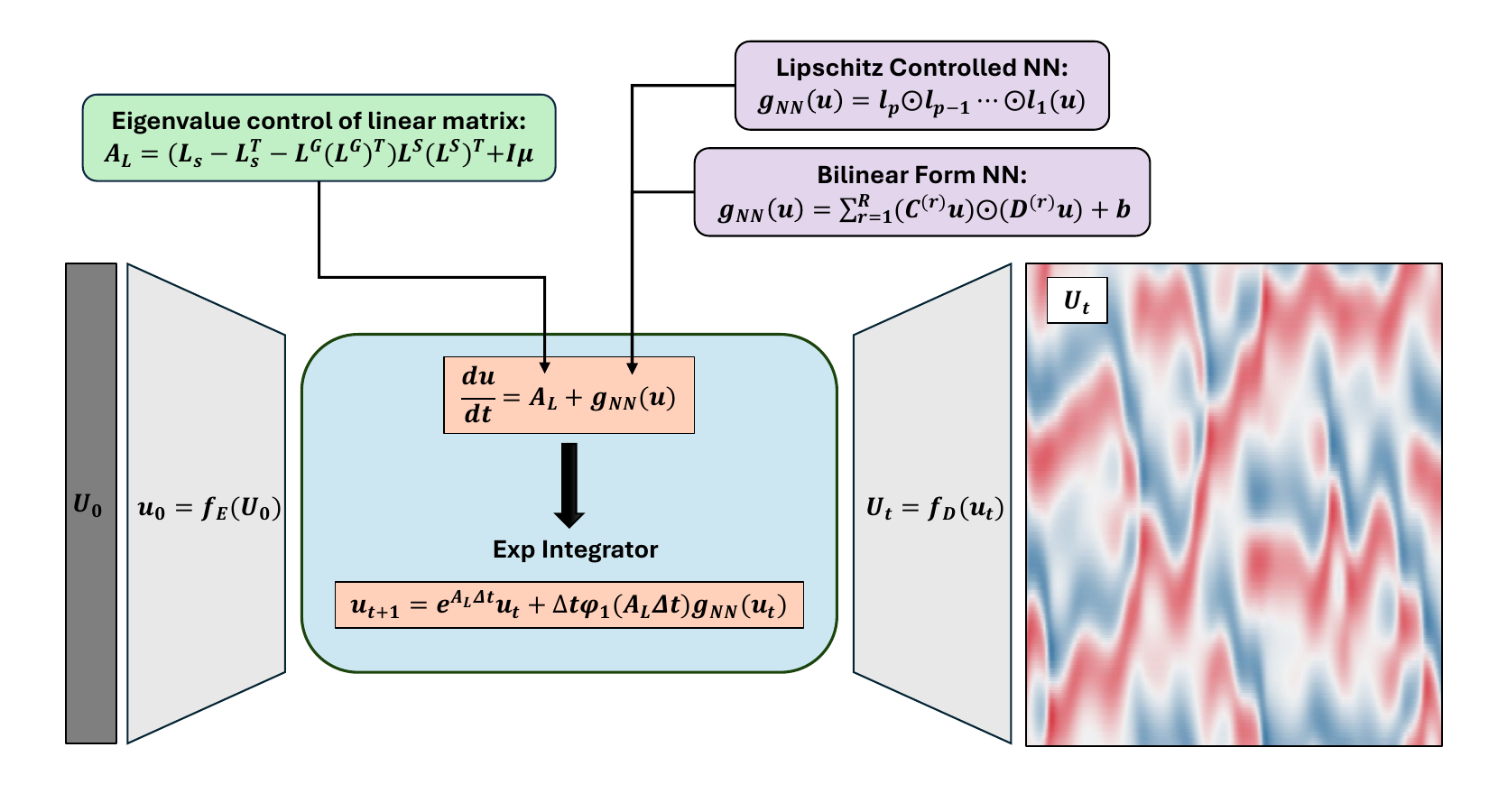}
    \caption{{Schematic of the autoencoder for 
    our structure preserving NODE. The dark grey block represents the initial data which is encoded using $f_E$ represented by the left light grey block. The latent space dynamics are evolved in the light blue block. The green and purple blocks represent the neural networks for the linear and nonlinear operators, respectively. These neural networks are used to push the dynamics forward using the exponential integrator defined in \eqref{eq:expSystemP1} represented by the light orange colored blocks. The data in the latent space is then decoded back into physical space using $f_D$. Decoded data is represented by the right light grey block. The solution, $U_t$ is represented by the right most block.
    }}
    \label{fig:autoEncoderSchematic}
\end{figure}

The contributions of our work include both structure-preserving 
enhancements and improvements in time stepping and training. 
In our approach, a coordinate transformation is learned via an autoencoder
in which the dynamics are described by a NODE with a
linear and nonlinear partitioning.
{Our approach is summarized by the schematic in
Figure~\ref{fig:autoEncoderSchematic}.}
{In related work, augmented Neural ODEs~\cite{dupont2019} expand the hidden state space to enhance expressivity and improve numerical stability.
Likewise, the structure-preserving formulation introduced here implicitly augments the state space through a linear-nonlinear partition, achieving similar advantages for modeling stiff systems.}

The linear part of the system is parameterized using a novel Hurwitz matrix decomposition based on~\cite{duan1998note}. This promotes long-term stability 
through bounding the real parts of the linear operator eigenvalues from above by a 
specified shift, $\mu\in\mathbb{R}$. 
The nonlinear operator is represented using either a bilinear form or a Lipshitz controlled neural network based on the innovations of our previous work~\cite{serino2024intelligent}. The bilinear form has the advantage of clearly separating the linear and nonlinear operators 
while a neural network
still contains a nonzero linear contribution in it's Taylor expansion.
In either case, we show that a local
Lipschitz bound on the nonlinear operator can provably guarantee stable
long-term predictions for certain dynamical systems.
{Lyapunov-based neural networks~\cite{lyapunov24} offer a related perspective, as they enforce stability through explicit Lyapunov-function parameterizations that guarantee monotonic energy decay along system trajectories. While sharing the goal of stability preservation, these methods are rooted in control-oriented formulations, whereas our approach achieves Lyapunov stability implicitly through Hurwitz spectral constraints on the linear operator and Lipschitz control on the nonlinear term, tailored specifically for stiff dynamical systems.}
The other major contribution of our work involves 
using an exponential integrator for the time integration.
We propose a matrix-free method for advancing the dynamics 
based on the algorithm developed in~\cite{al2011computing}. 
In our numerical examples, we show that the combination of structure-preserving
networks and exponential integrators enables us to robustly learn the Robertson 
system, which is a stiff system bridging time scales over multiple orders of 
magnitude.
For cases with high-dimensional data we use an autoencoder to significantly
reduce the dimensionality and aid in our training. 
We demonstrate using the Kuromoto-Sivashinsky equations that evolving dynamics on a low-dimensional latent space while preserving the linear and nonlinear partition allows a significant reduction in training time while retaining the stability of the model. 
We note that latent dynamics discovery has seen success in other scientific machine learning approaches (e.g.~\cite{fries2022lasdi, xie2024latent}).

Training and deploying neural ODEs in practical problems is often complicated by stiffness in the dynamics, which occurs when some or all of the solution components undergo rapid changes relative to transient behavior.
Stiff behavior forces explicit solvers to take prohibitively small timesteps. A number of recent
works have utilized ML to address the challenges of stiff systems.
In \cite{caldana2024neural} the stiffness issue is resolved by introducing a suitable reparameterization in time. The map produces a nonstiff surrogate that can be cheaply solved with an explicit method. In \cite{xie2024latent} the authors utilize a physics-assisted autoencoder to learn a low-dimensonal latent space of a collisional radiative model. In \cite{kim2021stiff} the authors utilize equation scaling to mitigate the stiffness problem. 
The method proposed in this work is capable of resolving the stiff dynamics by learning a suitable coordinate transformation into a new system composed of a linear operator capturing the stiffness of the system and a nonlinear operator which can be easily integrated using a standard first-order exponential integrator. This capability is highlighted in Roberton's chemical reaction problem in Section \ref{sec:Robertson}, in which conventional 
exponential integrators fail to eliminate 
the stiff time step restriction 
resulting from the original form of the equations.

The rest of the paper is organized as follows. Section \ref{sec:Parameterizations} presents the details behind the parameterizations of the linear and nonlinear parts of NODE along with implementation details. Section \ref{sec:Analysis} gives a result about the Lyapunov stability and provides an error bound for the NODE. 
Section \ref{sec:expIntegrators} provides a brief introduction to exponential integrators. 
The details related to training and their results are given in Section  \ref{sec:results}. Conclusive remarks are given in \ref{sec:conclusions}.

\section{Structure-Preserving NODE}\label{sec:Parameterizations}
This work focuses on 
dynamical systems that can be transformed into systems of the
form
\begin{equation}
\frac{du}{dt} = Au + g(u), 
\label{eq:linearPlusNonlinear}
\end{equation}
where $u \in \mathbb{R}^n$,  $g:\mathbb{R}^n\rightarrow\mathbb{R}^n$ is a nonlinear function,
and $A \in \mathbb{R}^{n \times n}$
has eigenvalues whose real parts are bounded above by a known estimate $\mu$.
Many semi-discretized time-dependent PDEs, such as the Navier-Stokes equations and Kuramoto-Sivashinsky equation, can be modeled using this framework.

Our proposed structure-preserving NODE takes the form
\begin{equation}
\frac{du}{dt} = A_{L}u + g_{NN}(u).
\label{eq:linearPlusNonlinearNODE}
\end{equation}
Here the linear trainable matrix $A_{L}$ is parameterized using  
$A_{L} = A_H + I\mu$, where $A_H$ has eigenvalues that
all have negative real part. The estimate for $\mu$ is problem specific. For example, if the linearized dynamics are known to be dissipative, then $\mu = 0$ is a suitable choice. 
In the case of Kuramoto-Sivashinky considered in
Section~\ref{sec:ksEQN},
we applied a Fourier analysis to the equations to determine an estimate.
Section~\ref{sec:HurwitzParam} describes
how $A_H$ can be parameterized using
a Hurwitz matrix decomposition.
The nonlinear operator, $g_{NN}$, is parameterized using either a bilinear form neural network or a Lipschitz controlled neural network,  
as described in Sections~\ref{sec:BilinearForm} and  \ref{sec:LipschitzControl}, respectively. 

\subsection{Hurwitz Matrix Parameterization}\label{sec:HurwitzParam}
A matrix where all the eigenvalues have negative real
parts is a Hurwitz matrix.
We develop a general decomposition for a Hurwitz matrix based on the following theorem.

\begin{theorem}
A matrix $A_H \in \mathbb{R}^{n \times n}$ is Hurwitz stable if and only if there exists a symmetric negative-definite matrix $G_s$,
skew symmetric matrix $G_u$, and a symmetric positive-definite matrix $S$ such that 
\begin{equation}
A_H = (G_s + G_u) S.
\label{eq:HurwitzMatrix}
\end{equation}
\end{theorem}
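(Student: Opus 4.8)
The plan is to reduce both implications to the classical Lyapunov characterization of Hurwitz stability in its ``dual'' form: a real matrix $A_H$ has all eigenvalues with negative real part if and only if there exists a symmetric positive-definite $Q$ with $A_H Q + Q A_H^{\top} \prec 0$. (This is equivalent to the more familiar $P A_H + A_H^{\top} P \prec 0$ via $P = Q^{-1}$; I single out the right-multiplied version because it is exactly $A_H$ multiplied on the right by the positive-definite factor appearing in \eqref{eq:HurwitzMatrix}.) Once this is in hand, the decomposition \eqref{eq:HurwitzMatrix} is essentially a restatement of the inequality after splitting $A_H Q$ into its symmetric and skew-symmetric parts.

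For the direction ($\Leftarrow$), suppose $A_H = (G_s + G_u)S$ with $G_s$ symmetric negative-definite, $G_u$ skew-symmetric, and $S$ symmetric positive-definite. I would set $Q = S^{-1}\succ 0$, so that $A_H Q = G_s + G_u$, and then observe that the symmetric part of $A_H Q$ is $\tfrac12\!\left(A_H Q + Q A_H^{\top}\right) = \tfrac12\!\left((G_s+G_u)+(G_s+G_u)^{\top}\right) = G_s$, using $G_s^{\top}=G_s$ and $G_u^{\top}=-G_u$. Hence $A_H Q + Q A_H^{\top} = 2G_s \prec 0$, and the Lyapunov theorem yields that $A_H$ is Hurwitz. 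If a self-contained argument is preferred, for any eigenpair $A_H v = \lambda v$ with $v\neq 0$ one sets $w = Q^{-1} v \neq 0$ and computes $w^{*}\!\left(A_H Q + Q A_H^{\top}\right)\!w = 2\,\mathrm{Re}(\lambda)\, v^{*}Q^{-1}v$; since the left side is $w^{*}(2G_s)w < 0$ and $v^{*}Q^{-1}v>0$, this forces $\mathrm{Re}(\lambda)<0$.

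For the direction ($\Rightarrow$), assume $A_H$ is Hurwitz. Then the Lyapunov equation $A_H Q + Q A_H^{\top} = -I$ has a symmetric positive-definite solution, concretely $Q = \int_0^{\infty} e^{A_H t} e^{A_H^{\top} t}\,dt$, the integral converging precisely because $A_H$ is Hurwitz (and $v^{\top}Qv = \int_0^\infty \|e^{A_H^{\top} t}v\|^2\,dt > 0$ for $v\neq 0$). I would then set $S = Q^{-1}\succ 0$ and $M = A_H S^{-1} = A_H Q$, and split $M = G_s + G_u$ into $G_s = \tfrac12(M+M^{\top})$ and $G_u = \tfrac12(M-M^{\top})$. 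By construction $G_u$ is skew-symmetric, $G_s = \tfrac12\!\left(A_H Q + Q A_H^{\top}\right) = -\tfrac12 I \prec 0$, and $(G_s + G_u)S = M S = A_H S^{-1} S = A_H$, giving the claimed decomposition.

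The main point to get right is not a genuine obstacle but a bookkeeping choice: using the right-multiplied Lyapunov inequality $A_H Q + Q A_H^{\top} \prec 0$ and identifying $G_s$ with the symmetric part of $A_H S^{-1}$, whose negative-definiteness is then immediate from the Lyapunov relation. The only external ingredient is the existence and positive-definiteness of the Lyapunov solution, which is standard and can be supplied by the integral formula above; everything else is the symmetric/skew decomposition and a one-line verification that $(G_s+G_u)S = A_H$.
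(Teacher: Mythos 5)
Your proof is correct. Note that the paper does not supply its own argument for this theorem — it simply cites Duan and Patton's note, whose proof likewise runs through the Lyapunov matrix equation — so your reduction to the dual Lyapunov inequality $A_H Q + Q A_H^{\top} \prec 0$ with $Q = S^{-1}$, followed by the symmetric/skew split of $A_H Q$, is essentially the standard route; the self-contained eigenpair computation and the explicit integral solution $Q=\int_0^\infty e^{A_H t}e^{A_H^{\top}t}\,dt$ make your write-up complete without any external citation, which is more than the paper provides.
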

\noindent This theorem and its proof can be found in \cite{duan1998note}.

The Cholesky factorization, which involves a product of a lower triangular matrix with its transpose, is
a parameterization for 
symmetric positive definite matrices and involves
$\frac{n(n+1)}{2}$ unique parameters. The Cholesky factorization is used to parameterize $S$ 
and $G_s$,
\begin{subequations}
\begin{align}
S &= L^S(L^S)^{T},\\
G_s &= -L^G(L^G)^{T},
\end{align}
\end{subequations}
where $L^S$ and $L^G$ are lower triangular matrices.
The skew-symmetric matrix $G_u$ 
is formed using
\begin{align}
G_u = L_s - L_s^T,
\end{align}
where $L_s$ is a strictly lower triangular matrix, $L_s$
and therefore requires
$\frac{n(n-1)}{2}$ unique parameters.
The matrix $A_H$ is thus parameterized by a combination of $L^S$, $L^G$, and $L_s$.
The total number of parameters needed to describe $A_H$ is $\frac{3n^2+n}{2}$. 

\subsection{Bilinear Form Neural Network}\label{sec:BilinearForm}
One option to approximate 
the nonlinear part, $g$, is to 
use a bilinear form. 
By construction, the bilinear 
form has zero contribution
to the linear operator since it is built with quadratic forms.
This property is not guaranteed when using a vanilla feed-forward network for $g$. 
This creates a strict separation of the linear and nonlinear dynamics that allows recovery of the linear operator $A$ in an equation learning setting. 
We utilize the low rank-bilinear form network as defined in~\cite{serino2024intelligent}. It is defined by 
\begin{equation}
g_{NN}(u) = \sum \limits_{r = 1}^{R} (C^{(r)} u) \odot (D^{(r)} u) + b,
\label{eq:bilinearForm}
\end{equation}
where $u \in \mathbb{R}^{n}, \ C^{(r)} \in \mathbb{R}^{n \times n}, \ D^{(r)} \in \mathbb{R}^{n \times n},$
$b \in \mathbb{R}^{n \times 1}$, $\odot $ represents the Hadamard product (element-wise multiplication), and $R, \ 1 \leq R \leq n$ represents the rank of approximation which is chosen much smaller than $n$ during the training.

\subsection{Lipschitz Controlled Neural Network}\label{sec:LipschitzControl}
The second option we explore to approximate $g$ with is a neural network with a bounded Lipschitz constant.
As shown in Section~\ref{sec:LipunovStability},
having a Lipschitz bound on $g_{NN}$ has an impact on the long-term stability of the dynamics. We introduce a novel Lipschitz-controlled neural network architecture.
{
\definition\label{def:lcnn}
The network is a feedforward network given by 
\begin{align}
g_{NN}(u) =  l_p \odot l_{p-1} \dots  \odot l_1(u)
\end{align}
where
\begin{align}
  l_i(z) = \sigma(B_i z + b_i), \qquad
    B_i = L^{1/p} \frac{W_i}{\max(1,\, \sqrt{\|W_i\|_{\infty} \|W_i\|_1})},
\end{align}
$W_i\in\mathbb{R}^{n_i\times n_{i+1}}$ are trainable matrices, $n_1=n_{p+1}=n$, 
$n_2=\dots=n_p=d$ is a chosen latent dimension and $\sigma$ is a $1$-Lipschitz activation function.
$L>0$ is the desired Lipschitz constant of the network $g_{NN}$. 
}

Note that the $1$-norm and $\infty$-norm of a matrix are the maximum absolute column sum and the maximum absolute row sum, respectively. 
Therefore, the realization of $B_i$ can be implemented in a manner compatible with automatic differentiation.
The following theorem shows $g_{NN}$ has the desired property.
{\theorem
The network $g_{NN}(u)$ given in Definition \ref{def:lcnn} is $L$-Lipschitz.
}

\begin{proof} 
First, we show each layer $l_i$ is $L^{1/p}$-Lipschitz. \das{Let $z_1$ and $z_2$ be inputs for the layer $l_i$.
Since $\sigma$ is a 1-Lipschitz 
activation function, }
\begin{align*}
\| l_i(z_1) - l_i(z_2) \|_{2} &= \| \sigma (B_i z_1 + b_i) - \sigma (B_i z_2 + b_i) \|_2\\
& \leq  \| B_i z_1 + b_i - (B_i z_2 + b_i) \|_2\\
& \leq \| B_i (z_1 - z_2) \|_2\\
& \leq  \| B_i \|_2\|z_1 - z_2 \|_2.
\end{align*}
The operator norm of $B_i$ may be bounded as follows
\begin{align*}
\|B_i \|_2 &=L^{1/p} \frac{\|W_i \|_2}{\max\left(1,\sqrt{\|W_i\|_1\|W_i\|_{\infty}}\right)}\leq L^{1/p},
\end{align*}
in which the inequality follows a special case of H\"{o}lder's inequality, $\|W_i\|_2 \leq \sqrt{\|W_i\|_1 \|W_i\|_{\infty}}$. Therefore, each layer is $L^{1/p}$-Lipschitz. It follows that the network $g_{NN}$ is $L$-Lipshitz.
\end{proof}

Our previous work \cite{serino2024intelligent} introduced a bi-Lipschitz affine transformation (bLAT) network,
achieving Lipschitz control using a parameterization 
based on the singular value decomposition (SVD). 
The parameteization presented here is more efficient since the SVD is more expensive to evaluate than the $1$ and $\infty$ norms. 
However, unlike the above, the bLAT network is bi-Lipschitz. 

\subsection{Autoencoder}
Inspired by the seminal work \cite{lusch2018deep} for complex dynamical systems involving high-dimensional data, 
an autoencoder produces a 
low dimensional representation of the dynamics which can be integrated efficiently.
For high-dimensional problems like the Kuramoto-Sivashinsky equation, we find it is necessary to include the dimension reduction in structure-preserving NODE to avoid training directly on a high-dimensional space, which is more challenging and expensive. 
For low-dimensional problems such as the Robertson problem in Section \ref{sec:Robertson}, the autoencoder is additionally used to learn a transformation into coordinates that yields a system of the same dimension but with stiffness coming from the linear part. Instead of simplifying our model by reducing the dimension, the system is transformed such that our exponential integrator is able to integrate utilizing large timesteps. 


We consider some dynamical system
for $U(t)\in \mathbb{R}^{m}$ and 
assume that it can be transformed into a variable $u\in \mathbb{R}^n$, $n < m$ 
where the dynamics are described by a system of the form 
\eqref{eq:linearPlusNonlinear}.
Two feed-forward neural networks are trained $f_E(U;\alpha)$ and $f_D(u;\beta)$, which represent the encoder and decoder and have network parameters $\alpha$ and $\beta$, respectively. The network $f_E:\mathbb{R}^m \to \mathbb{R}^n$ takes in the initial condition in physical space and network parameters and outputs the initial condition in the latent space $\mathbb{R}^n$ with $n\le m$. Once the initial condition is projected onto the latent space we use a time integrator to evolve the dynamics forward in time.
By evolving the dynamics in the low-dimensional latent space $\mathbb{R}^n$, efficiency 
is gained in the numerical integration step.
In mathematical symbols we have $f_E(U_0) = u_0$ then we evolve $u_0$ several timesteps (using the exponential integrator in this work) to obtain $\lbrace u_1, u_2, \dots, u_t \rbrace $. After this, these are each transformed back into the full space data using the decoder $U_i = f_D(u_i)$, which are then used to compute the loss term. A schematic of this process is given in Figure \ref{fig:autoEncoderSchematic}.

\section{Analysis of the Network}\label{sec:Analysis}
In this section two results regarding the proposed structure-preserving NODE are presented. 
The results include a Lyapunov stability theorem in Section~\ref{sec:LipunovStability}
and an error bound for a trained model in Section~\ref{sec:errorBound}.

\subsection{Lyapunov Stability of the Network}\label{sec:LipunovStability}
The NODE is parameterized such that the spectrum of the 
linear operator $A_{L}$ is bounded from above by some $\mu\in\mathbb{R}$.
In the case of $\mu < 0$, we show that  
any fixed point is an asymptotically stable attractor 
for the dynamical system defined by the NODE when a weak constraint on the nonlinear part is satisfied. 
The following theorem is applicable to the case of learning near any fixed point $u_e$.


{\theorem
If the structure-preserving NODE \eqref{eq:linearPlusNonlinearNODE} is parameterized such that 
the linear operator $A_{L}$
satisfies ${\rm Re}(\lambda(A_L)) \le \mu < 0$ and 
$g_{NN}$ is an $L$-Lipschitz function
in the neighborhood of a fixed point $u=u_e$,
then for sufficiently
small $L$ the NODE is stable in the sense of Lyapunov at the fixed point $u=u_e$.}

\begin{proof}
First we center around the fixed point 
using the change of coordinates,
$u = \tilde{u} + u_e$. This results in a 
system of the form
\begin{align}
\frac{d}{dt}\tilde{u} = A \tilde{u} + \tilde{g}_{NN}(\tilde{u})
\end{align}
where $\|\tilde{g}_{NN}\| \le L \|\tilde{u}\|$
near $\tilde{u}=0$.
Therefore, without loss of generality, 
we prove the theorem using the notation of the 
original system assuming $u_e=0$
and $\|g\| \le L \|u\|$.
It suffices to show the existence of a Lyapunov function, $V(u)$, such that $V(u) \geq 0,
V(u) = 0$ if and only if $u = 0$, and $\frac{d}{dt} V(u) \leq 0$ for all $t>0$.
Let $A_{L} = T J T^{-1}$ be the Jordan decomposition for $A_{L}$.
According to \cite{harier1993}, for any $\epsilon >0$ it is possible to write 
\begin{equation}
J = T^{-1}A_{L}T = \rm{diag} \left \lbrace \begin{pmatrix} \lambda_1 & \epsilon & \\
&\ddots & \epsilon\\ & & \lambda_1\end{pmatrix},\begin{pmatrix} \lambda_2 & \epsilon & \\
&\ddots & \epsilon \\ & & \lambda_2\end{pmatrix},\dots \right \rbrace,
\end{equation}
where $\lambda_1, \lambda_2, \dots$ are 
the unique eigenvalues of $A_L$.
Let $z = T^{-1} u \in \mathbb{C}^n$.
We define $V(u) = \langle z, z \rangle = z^* z$
where $\langle \cdot, \cdot \rangle$
is the complex inner product, then
\begin{align*}
\frac{d}{dt} V(u) & = \frac{d}{dt} \langle z, z \rangle\\
& = \left(\frac{d}{dt} z\right)^* z + z^* \frac{d}{dt} z \\
&= z^* (J + J^*) z + 2{\rm Re}\langle z, T^{-1} g_{NN}(Tz) \rangle
\end{align*}
It follows that $\frac{1}{2}(J + J^{*})$ is block-diagonal where each block is a tridiagonal matrix with $\rm{Re} \lambda_i$ on the diagonal and $\frac{\epsilon}{2}$ on the superdiagonal and subdiagonal. The eigenvalues are given by $\rm{Re}\lambda_i + \epsilon \cos \left(\frac{\pi k}{m+1}\right)$ with $k = 1, \dots, m$. 
Recall that $\rm{Re}\lambda_i\le \mu$.
Therefore, $\frac{1}{2} z^* (J + J^*) z \le \mu + \epsilon$
and we have the bound
\begin{align*}
\frac{d}{dt} V(u) &\leq 2(\mu + \epsilon) \|z\|^2 + 2\|z\|\|T^{-1}\|\|g_{NN}(Tz)\|
\end{align*}
\bogus{
\begin{align*}
\frac{d}{dt} V(u) & = \frac{d}{dt} \|z\|^2\\
& = 2 {\rm Re}\left\langle z, \frac{d}{dt}z \right\rangle\\
& = 2 {\rm Re}\langle z, J z \rangle + 2{\rm Re}\langle z, T^{-1} g_{NN}(Tz) \rangle 
&\leq 2\kappa(J) \|z\|^2 + 2\|z\|\|T^{-1}\|\|g_{NN}(Tz)\|
\end{align*}}
Due to the Lipschitz bound, $\|g_{NN}(u)\|\le L\|u\|$, we obtain the following bound.
\begin{align*}
\frac{d}{dt} V(u) 
&\leq 2(\mu + \epsilon + L \|T^{-1}\| \cdot \|T\|)V(u)
\end{align*}
For any $\epsilon$ satisfying $\epsilon + \mu < 0$
we can choose
$L < \frac{|\mu + \epsilon|}{\|T^{-1}\| \cdot \|T\|}$,
to make the right-hand side is negative. 
Therefore, for
$L$ sufficiently small, $V(u)$ is a Lyapunov function. This implies Lyapunov stability.
\end{proof}

It is important to note that the above Lyapunov stability result of the proposed network is only applicable to a neighborhood of an equilibrium. It does not extend to other attractors or chaotic systems. Nevertheless, we show later empirically that this predicts a good long-time solution for chaotic dynamics when evaluated using appropriate statistical measures. The above assumption of addressing network stability near equilibrium is motivated by the practical stiff dynamics such as chemical reaction and collisional radiative models \cite{xie2024latent}, where the dynamics of interest is a transition from various initial conditions to the corresponding equilibria.

\subsection{Bounding the Error of a Learned Model}\label{sec:errorBound}
Given an estimate on the approximation error of the learned operators,
a bound can be obtained for the error between the 
ground truth and learned dynamics.

\begin{theorem}
Suppose that $u(t)$ is a solution of the system of differential equations $u' = Au+g(u)$, $u(t_0) = u_0$,  $v(t)$ is and approximate solution generated by structure-preserving NODE where $\| v\| \leq \mathcal{V}$. If $u$ and $v$ are differentiable, then the error between the two trajectories is bounded by  
\begin{equation}
\|u(t) - v(t) \| \leq \rho e^{(||A|| + L)(t-t_0)} + \frac{(\| dA\| + \|dg\|) \mathcal{V}}{||A|| + L} \left(e^{(||A|| + L)(t-t_0)}-1 \right).
\label{eq:errorBound}
\end{equation}
Here $\rho$ is the error in the initial condition. $dA$ and $dg$ is the difference between the neural network operators and the ground truth for $A$ and $g$, respectively, and $L$ is the Lipschitz constant for $g_{NN}$. 
\end{theorem}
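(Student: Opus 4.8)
The plan is to derive a differential inequality for the error $e(t) := u(t) - v(t)$ and then apply a Gr\"onwall-type argument. First I would write down the two governing equations. The ground truth satisfies $u' = Au + g(u)$. The learned trajectory $v$ satisfies the NODE $v' = A_L v + g_{NN}(v)$; introducing the operator discrepancies $dA := A_L - A$ and $dg := g_{NN} - g$ (in the sense made precise in the statement, i.e.\ bounds $\|dA\|$ and $\|dg\|$ on the mismatch over the relevant region), I would rewrite $v' = A v + g(v) + dA\, v + dg(v)$ so that the ``true'' dynamics appear explicitly with forcing terms of size controlled by $\|dA\|\,\mathcal{V}$ and $\|dg\|$.

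Next I would subtract the two equations to obtain
\begin{align*}
e'(t) = A\big(u(t)-v(t)\big) + \big(g(u(t)) - g(v(t))\big) - dA\, v(t) - dg(v(t)).
\end{align*}
Taking norms and using the triangle inequality, $\|A e\| \le \|A\|\,\|e\|$, the Lipschitz bound $\|g(u) - g(v)\| \le L\,\|u - v\| = L\,\|e\|$ (here I would note that $L$ serves as a Lipschitz constant valid along the trajectories), and $\|v\| \le \mathcal{V}$, gives the scalar differential inequality
\begin{align*}
\frac{d}{dt}\|e(t)\| \le \big(\|A\| + L\big)\,\|e(t)\| + \big(\|dA\| + \|dg\|\big)\,\mathcal{V},
\end{align*}
where I use that $\frac{d}{dt}\|e\| \le \|e'\|$ (valid a.e.\ for differentiable $e$, or one can work with $\|e\|^2$ and divide to avoid the nonsmoothness of the norm).

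Finally I would integrate this linear inequality. With $a := \|A\| + L$ and $b := (\|dA\| + \|dg\|)\mathcal{V}$, Gr\"onwall's lemma (or directly multiplying by the integrating factor $e^{-a(t-t_0)}$) yields $\|e(t)\| \le \|e(t_0)\| e^{a(t-t_0)} + \frac{b}{a}\big(e^{a(t-t_0)} - 1\big)$, and substituting $\|e(t_0)\| = \rho$ gives exactly \eqref{eq:errorBound}. The only genuinely delicate point is the justification of $\frac{d}{dt}\|e\| \le \|e'\|$ and the precise meaning of the quantities $\|dA\|$ and $\|dg\|$ — in particular, $dg$ must be bounded uniformly on the region traversed by $v$ (hence the hypothesis $\|v\| \le \mathcal{V}$), and the Lipschitz constant $L$ must apply to $g$ (not just $g_{NN}$) along the trajectories; everything else is the standard Gr\"onwall computation. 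I would state these regularity caveats briefly rather than belabor them.
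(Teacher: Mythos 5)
Your proposal is correct and follows essentially the same route as the paper's proof: the same add-and-subtract decomposition of the right-hand sides into an operator-mismatch term bounded by $(\|dA\|+\|dg\|)\mathcal{V}$ and a trajectory-difference term bounded by $(\|A\|+L)\|u-v\|$, followed by the differential form of Gr\"onwall's inequality. The caveat you raise about $L$ needing to be a Lipschitz constant for $g$ (rather than $g_{NN}$) in the term $\|g(u)-g(v)\|$ is a real, if minor, imprecision that is also present in the paper's own argument.
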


\begin{proof}
If $m(t) = \|v(t) - y(t)\|$, then 
\begin{align*}
\frac{dm(t)}{dt} &= \lim \limits_{h \to 0} \frac{m(t+h) - m(t)}{h} \\
&= \lim \limits_{h \to 0} \frac{\|v(t+h) - u(t+h)\| - \|v(t)-u(t)\|}{h} \\
&\leq \lim \limits_{h \to 0} \frac{\|v(t+h) - u(t+h) - (v(t)-u(t))\|}{h} \\
&=\|v'(t) - u'(t)\|\\
&= \| A_{L}v + g_{NN}v - (Au+gu) \|\\
&= \| A_{L}v + g_{NN}v - (Av+gv) + (Av+gv) - (Au+gu) \|\\
&\leq (\|dA\|+\|dg\|)\|v\| + (\|A\| + L)\|v-u\|\\
&\leq (\|dA\|+\|dg\|)\mathcal{V} + (\|A\| + L)m(t)
\end{align*}
The result of \eqref{eq:errorBound} then follows directly from the classical differential form of Gronwall inequality.
\end{proof}

\section{Exponential Integrators}\label{sec:expIntegrators}
The proposed NODE form $\eqref{eq:linearPlusNonlinear}$ assumes a linear and nonlinear split where the stiffness of the system shall come primarily from the linear matrix $A$.
For such a system, the exponential integrator is an ideal technique to overcome its stiffness, and therefore it is leveraged in this work in the NODE training.
In this section a brief review exponential integrators is presented. For an in depth introduction see  \cite{hochbruck2010exponential}. Consider a system that may be written in the form 
\begin{equation}
\frac{du}{dt} = Au + g(u), 
\quad u(t_0) = u_0, \quad t \geq t_0,
\label{eq:linearPlusNonlinear2}
\end{equation}
where $u \in \mathbb{R}^n, \ A \in \mathbb{R}^{n \times n}$, and $g:\mathbb{R}^n\rightarrow\mathbb{R}^n$ is a nonlinear function. 
The solution of \eqref{eq:linearPlusNonlinear2} satisfies the nonlinear integral equation 
\begin{equation}
u(t) = e^{(t-t_0)A}u_0 + \int \limits_{t_0}^t e^{(t - \tau)A} g(\tau,u(\tau)) \ d\tau.
\end{equation}
A $p$-th order Taylor series expansion of $g$ about $t_0$ gives 
\begin{equation}
u(t) = e^{(t - t_0)A}u_0 + \sum \limits_{k = 1}^{p} \varphi_k((t - t_0)A)(t - t_0)^k g_k
+ O((t-t_0)^{p+1}),
\label{eq:expIntPthOrder}
\end{equation}
where 
\[g_k = \left. \frac{d^{k-1}}{dt^{k-1}}g(t,u(t)) \right|_{t = t_0}, \quad \varphi_k(z) = \frac{1}{(k-1)!} \int \limits_{0}^{1} e^{(1-\theta)z}\theta^{k-1} \ d\theta, \quad k \geq 1.\]
\bogus{
A $p$-th order approximation of \eqref{eq:expInt} is given by
\begin{equation}
u = e^{(t - t_0)A}u_0 + \sum \limits_{k = 1}^{p} \varphi_k((t - t_0)A)(t - t_0)^k g_k,
\label{eq:expIntPthOrder}
\end{equation}
}
The functions $\varphi_l(z)$ satisfy the recurrence relation 
\[ \varphi_{l+1}(z) = \frac{\varphi_{l}(z) - \frac{1}{l!}}{z}, \quad \varphi_0(z) = e^z,\]
and have the Taylor series expansion 
\[\varphi_l(z) = \sum \limits_{k = 0}^{\infty} \frac{z^k}{(k+l)!}.\]

The integrator used in this study is exponential time differencing (ETD) of order 1 \cite{minchev2005review}.  The ETD methods are based on approximating the nonlinear term by an algebraic polynomial. Note that there are many more exotic versions of exponential integrators such as exponential Rosenbrock methods and Lawson methods \cite{hochbruck2010exponential}.  

Evaluating the $\varphi$ functions is a nontrivial task. The computation of $\varphi_1$ is a well known problem in numerical analysis \cite{higham2002accuracy,hochbruck1998exponential,lu2003computing}.  Ref.~\cite{al2011computing} shows that it is possible to avoid computing the $\varphi$ functions \eqref{eq:expIntPthOrder} by solving a system with a single matrix exponential of size $(n + p) \times (n + p)$, where $n$ is the dimension of the system and $p$ is the order of the approximation. The system \eqref{eq:linearPlusNonlinear} is approximated using 
\begin{equation}
\hat{u}(t) = \begin{bmatrix} I_n & 0 \end{bmatrix} \exp \left((t-t_0) \begin{bmatrix} A & W\\ 0 & J \end{bmatrix}  \right) \begin{bmatrix} u_0 \\ e_p \end{bmatrix},
\label{eq:expSystem}
\end{equation}
where 
$e_p$ is a vector with a $1$ in the $p^{\rm th}$ element
and $0$'s elsewhere 
and
$W\in\mathbb{R}^{n\times p}$ and $J\in\mathbb{R}^{p\times p}$
are defined to be
\begin{align}
J = \begin{bmatrix} 0 & I_{p-1} \\ 0 & 0 \end{bmatrix},\qquad
    W_{ij} = 
    \begin{cases}
        g_k, & j=p-k+1 \\
        0, & \text{otherwise}
    \end{cases}.
\end{align}
$I_n$ and
$I_{p-1}$ are identity matrices with sizes
$\mathbb{R}^{n\times n}$ and
$\mathbb{R}^{p-1\times p-1}$, respectively.
When $p = 1$, then  $W = u_0$ and $J = 0$. Thus, for $p = 1$, \eqref{eq:expSystem} reduces to
\begin{equation}
\hat{u}(t) = \begin{bmatrix} I_n & 0 \end{bmatrix} \exp \left((t-t_0) \begin{bmatrix} A & g_0\\ 0 & 0 \end{bmatrix}  \right) \begin{bmatrix} u_0 \\ 1 \end{bmatrix}.
\label{eq:expSystemP1}
\end{equation}

A matrix-free algorithm developed in \cite{al2011computing} is used to compute \eqref{eq:expSystemP1}. This algorithm adapts the scaling and squaring method \cite{higham2005scaling} by computing $e^{t A}b \approx(T_m(s^{-1} t A))^s b$,
where $T_m$ is a truncated Taylor series.
Algorithmically, the computation is performed using
\begin{align}
e^{t A} b \approx \texttt{expmv}(A, b, t) = f_{s},
\end{align}
where $f_s$ is defined by the following recurrence relations
\begin{subequations}
\begin{alignat}{2}
b_{1, 0} &= b, \\
b_{i,k} &= \frac{t}{s k} A b_{i, k-1}, \qquad &&i=1,\dots,s, \quad k=1,\dots,m \\
b_{i+1,0} = f_i &= \sum_{k=0}^m b_{i,k}, \qquad &&i=1,\dots,s  \label{eq:recursesum}
\end{alignat}
\end{subequations}
The choice of $s$ and $m$ impact both
accuracy and efficiency.
The number of matrix vector multiplications is $sm$ and
an analysis for the optimal choice of $s$ and $m$ may be found in \cite{al2011computing}.
In practice, we choose $s$ to be proportional to 
the maximum time step of the training data and
the norm of $A$,
$s \sim \Delta t_{\rm max} \|A\|_2$,
and we choose $m$ such that~\eqref{eq:recursesum}
converges to some tolerance, $\tau$.
Utilizing the matrix-free computation eliminates the need to compute the matrix exponential in equation \eqref{eq:expSystem}, 
and can provide a faster alternative when the dimension
of $A$ is large.
Moreover, the computation of a matrix exponential is required for each trajectory due to the time dependency. Batching of data is improved since a matrix-free algorithm allows for vectorization. 
This form is used to evolve the dynamics for the structure-preserving NODE in Section~\ref{sec:results}. 
Note that higher-order versions 
of this scheme require the derivatives of the solution.

\das{
The matrix-free exponential integrator used in this work is currently implemented as a prototype in high-level Python, and its performance therefore reflects an unoptimized version of the algorithm. Its computational overhead depends on factors such as the number of matrix-vector multiplications, the system dimension, and the convergence properties of the chosen polynomial or rational approximation, which in turn depend on the spectrum of the matrix. In practice, the matrix-free approach scales favorably for large or sparse systems where direct matrix exponentiation or repeated linear solves are computationally expensive. Future compiled or GPU-accelerated implementations have the potential to substantially reduce the overhead, and we expect that further development of this algorithm by the numerical analysis community could yield significant performance gains.
}

Exponential integrators are explicit methods, but enjoy stability properties comparable to implicit methods. These methods are used to solve two types of stiff problems. The first are systems with a Jacobian whose eigenvalues has large negative real parts. These systems usually arise in the discretization of parabolic partial differential equations. The second type of problems are systems that are highly oscillatory in nature these systems have purely imaginary eigenvalues. The focus of this paper is on the former.

\section{Numerical Examples}
\label{sec:results}

In this section the structure-preserving NODE framework is applied to three examples, including a weakly nonlinear system with transient growth, 
the Robertson's chemical reaction problem, and the Kuramoto-Sivashinsky equation.
The first example highlights the 
benefits of using an exponential integrator compared to 
an implicit-explicit (IMEX) integration scheme, 
which includes improved accuracy when learning
dynamics using large time steps.
Moreover, we demonstrate that the properties of the exponential integrator allow us to deploy our NODE using different time steps than those used for training.
The Robertson example
highlights the ability to deal with a problem which has stiff dynamics with time scale separations. 
Finally, the Kuramoto-Sivashinsky equation
demonstrates learning chaotic dynamics using
our NODE.

\paragraph{Training details}
The input to the NODE is a set of initial conditions and the output a the set of trajectories evolved from the initial conditions. For the linear part of the proposed NODE, the proposed parameterization is used for the Hurwitz matrix learning. For the nonlinear part, either a bilinear form or a feed-forward neural network with Lipschitz controlled layers is used. The networks have 2 layers with 100 to 200 hidden dimensions. The activation function is $\tanh$. The autoencoder is made up of two simple feed-forward neural networks, one for the encoder and one for the decoder. The loss is computed using an approximation to the time integral of the mean squared error of the solution, given by 
\begin{align}
L(t, U, \tilde{U}) = 
    \sum_{n=0}^{N_t-1} \frac{1}{2}\left(
    (U_{n+1} - f_D(\tilde{u}_{n+1}))^2
    + (U_{n} - f_D(\tilde{u}_{n}))^2
    \right)(t_{n+1} - t_{n}),
    \label{eq:LossPrediction2}
\end{align}
where the initial latent variable is projected through $\tilde{u}_0=f_E(U_0)$. Note that the loss is computed in the original coordinates; thus, the autoencoder is optimized along with the NODE during optimization step.
The inclusion of this autoencoder network is optional in our framework.
Our implementation of the NODE uses Jax~\cite{jax} as a backend. 
Automatic differentiation is used to compute gradients of the 
loss function with respect to the trainable parameters,
and the Adam optimizer from the Optax~\cite{optax} package is 
used for optimizations.

\subsection{Taming Transient Growth}
In this first example we apply our structure-preserving NODE to learn 
transient growth for a simple system. 
We focus on a weakly nonlinear dynamical system in which the right-hand-side can be decomposed into the sum of linear and nonlinear operators,
\begin{equation}
\frac{dx}{dt} = Ax + \epsilon \sin(x), \quad x(0) = x_0,
\label{eq:simpleLinear}
\end{equation}
where $\sin(x)$ is interpreted to be a component-wise application of the function.
The linear operator, $A$, is chosen such that
in the limit of $\epsilon\to 0$
there is initial transient growth followed by long-term decay. This example is influenced by work done in~\cite{krovi2023improved} where the author analyzes how the norm of the matrix exponential contributes to the runtime of quantum algorithms for linear ordinary differential equations
(i.e., $\epsilon=0$). 
Some relevant quantities used in their analysis are 
\begin{subequations}
\begin{align}
\alpha(A) &= \max \{\text{Re}(\lambda) \;\vert\; \lambda \in \sigma(A) \} \label{eq:alpha} \\
\gamma(A) &= \max \left\{\lambda \;\vert\; \lambda \in \sigma\left((A + A^{T})/2\right) \right\}, \label{eq:mu}
\end{align} 
\end{subequations}
where $\sigma(\cdot)$ represents the the set of
eigenvalues of the argument.
The values of {$(\alpha, \gamma)$} describe the long term behavior of the norm of $e^{At}$. We focus on the case when $\alpha < 0$, which implies $\lim \limits_{t \to \infty}||e^{At}|| \to 0$. While this gives the long term behavior, the quantity $\alpha$ does not give any insight into the short term behavior around $t = 0$. The sign of $\gamma$ tells us whether $||e^{At}||$ has an initial grown or decay around $t = 0$, $\gamma > 0$ corresponding to growth and $\gamma < 0$ corresponding to decay. To illustrate how $\gamma$ impacts the short timescale we consider two matrices in $\mathbb{R}^{2\times 2}$,
\begin{equation}
A_{\text{decay}} = \begin{bmatrix} -2 & 1 \\ 0 & -2 \end{bmatrix}, \quad A_{\text{bump}} = \begin{bmatrix} -2 & 10 \\ 0 & -2 \end{bmatrix}.
\label{eq:decayAndBumpMatrices}
\end{equation}
As both matrices have eigenvalues equal to $-2$,
both systems have $\alpha < 0$, but 
$\gamma(A_{\text{decay}})  < 0 <\gamma(A_{\text{bump}})$. 
This shows that $e^{tA_{\text{decay}}}$ has norm that decays for all $t > 0$ and $e^{tA_{\text{bump}}}$ has an initial growth in the norm before the decay as is shown in Figure \ref{fig:expNorm}.
\begin{figure}[htb]
    \centering
    \includegraphics[width=.65\textwidth]{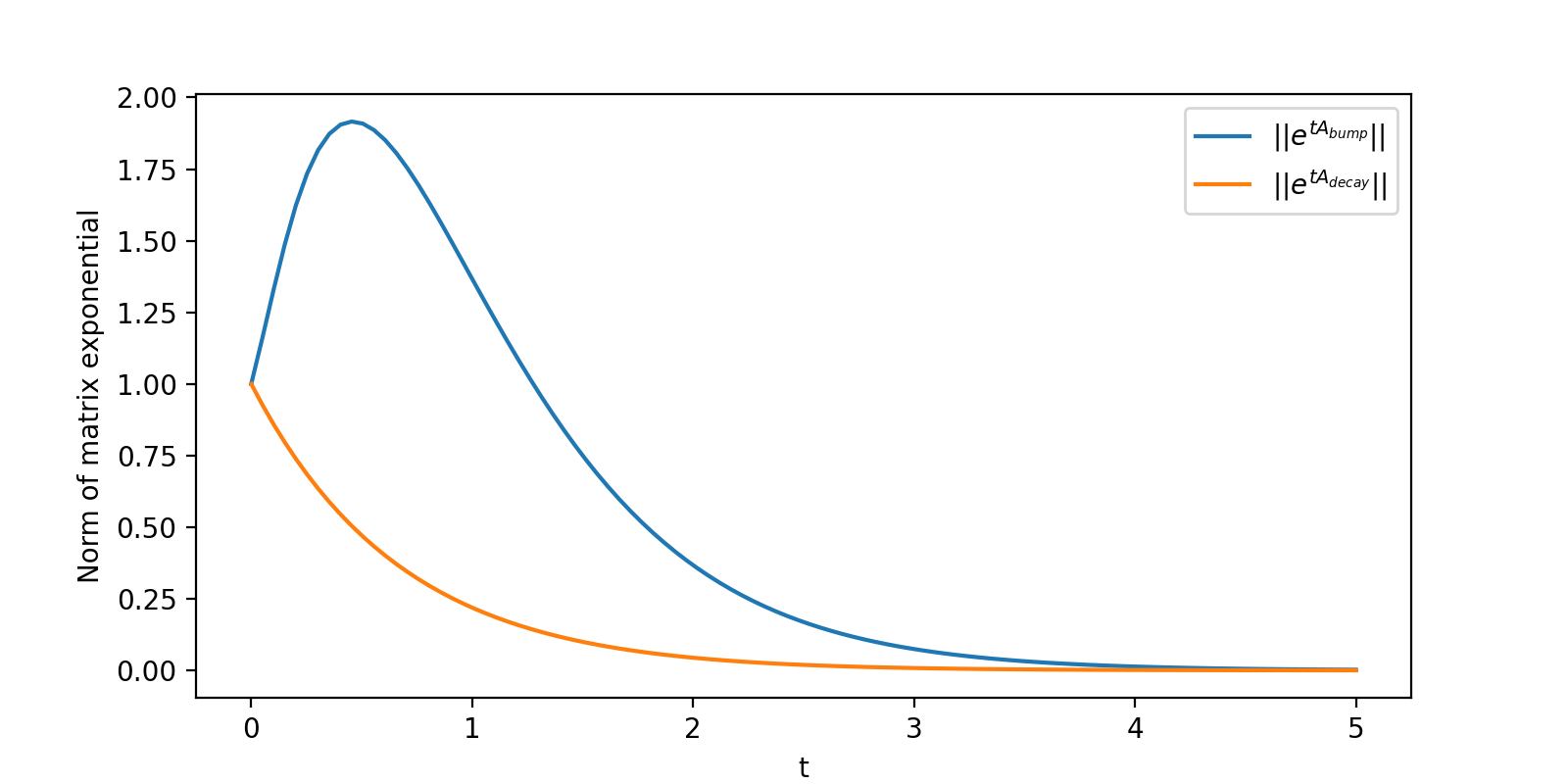}
    \caption{Plot of $||e^{tA_{\text{bump}}}||$ and $||e^{tA_{\text{decay}}}||$ versus time $t$. The blue curve corresponds to positive $\gamma$ defined in equation \eqref{eq:mu} and the orange corresponds to negative $\gamma$.
    }
    \label{fig:expNorm}
\end{figure}

For our experiments, we consider $A=A_{\text{bump}}$.
and $\epsilon\in\{0, 1.0\}$.
The initial bump in the norm $||e^{tA_{\text{bump}}}||$ makes the  dynamics more challenging to learn, as this implies initial transient growth before the long term decay. 
Data is generated using an adaptive Runge-Kutta-Fehlberg scheme. There are 1000 initial conditions evolved using 50 adaptive timesteps 
and a relative tolerance of $10^{-8}$. 
For training, only the data at a few points in time are utilized.
For $\epsilon = 0.0$ only the initial and final timesteps are utilized and for $\epsilon = 1.0$ only timesteps $0, 10, 20, 30, 40$, and $50$ are utilized. Note that the exponential integrator can learn the dynamics for $\epsilon = 0.0$ with a single timestep, but the IMEX scheme is unable to capture the dynamics. In our network, the nonlinear term is treated using
a bilinear form network when $\epsilon =0.0$ and Lipschitz controlled layers when $\epsilon = 1.0$. In both cases 2 layers and 100 hidden dimensions are used.
The Adam optimizer from Optax is used for training with a learning rate of 0.01
and batch size of 1000. 
The training is run for 1000 iterations for each case.
For comparison, the integration scheme is replaced with the implicit-explicit (IMEX) SSP2(2,2,2) L-Stable Scheme defined in Table II of~\cite{pareschi}. The predictions of the models for $\epsilon=0.0$ are shown in Figure \ref{fig:AbumpLinearModels}
and for $\epsilon=1.0$ are shown in Figure~\ref{fig:AbumpNonLinearModels}.
Learning with the exponential integrator yields more accurate results than the IMEX integrator for both values of $\epsilon$, the difference is more pronounced for the $\epsilon = 1.0$. 

\begin{figure}[h]
    \centering
    \includegraphics[width=0.45\textwidth]{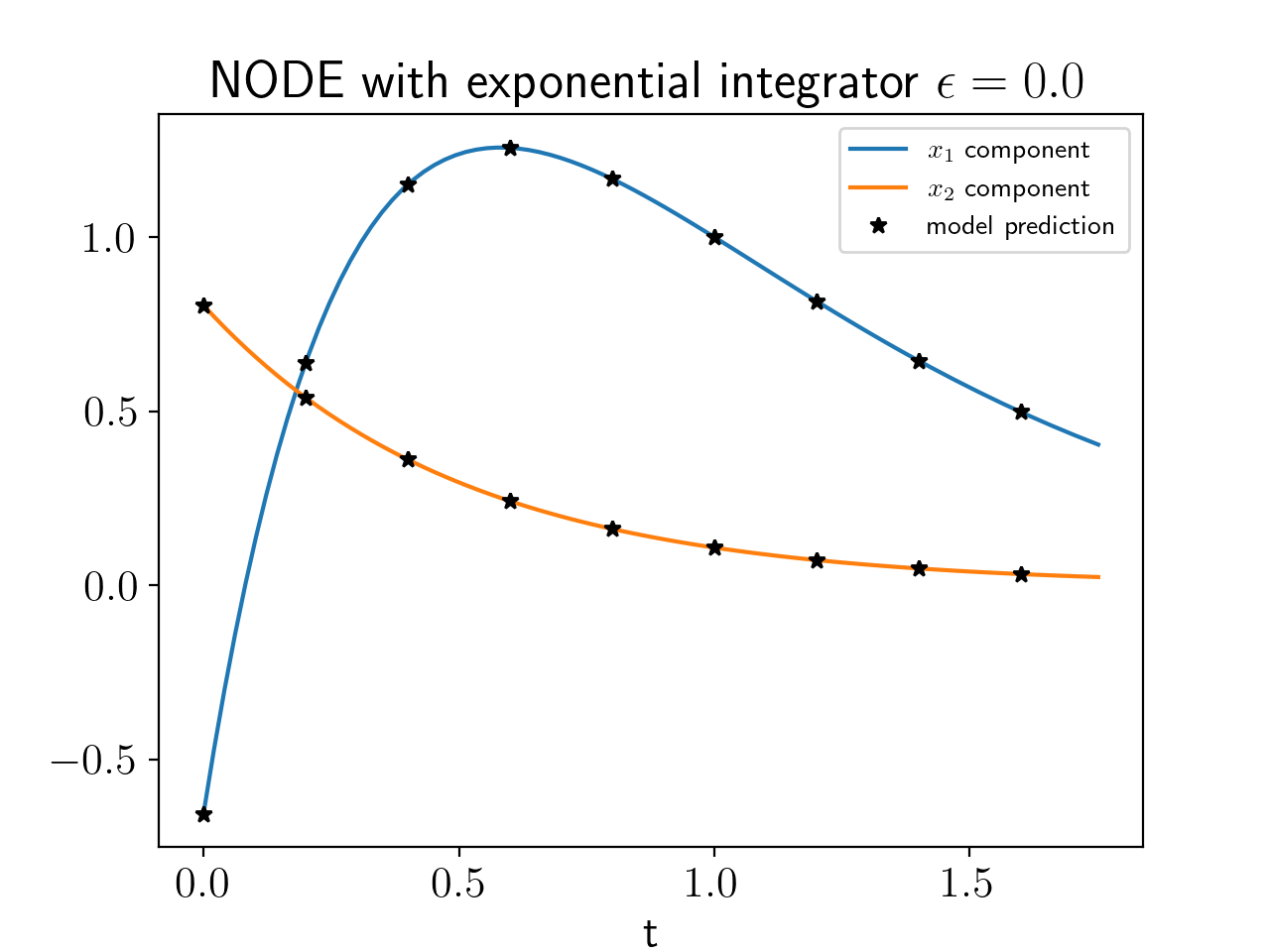}
    \includegraphics[width=0.45\textwidth]{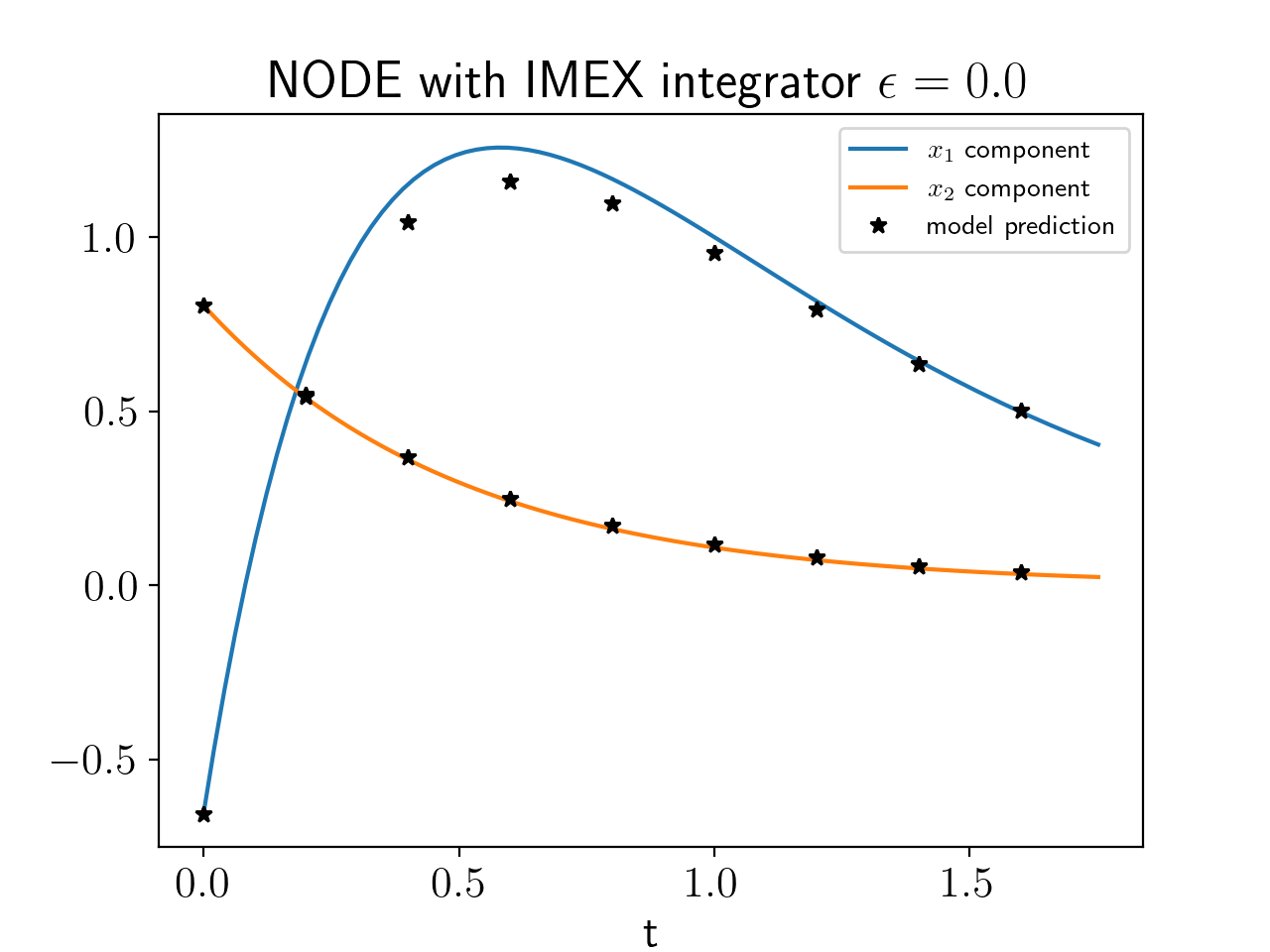}
    \caption{Prediction of the weakly nonlinear system using and exponential integrator (Left) and and IMEX integrator (Right) using a bilinear form for the nonlinear term. Black dots represent the model prediction using timesteps of $\Delta t = 0.2$. The blue and orange curves  represents the true dynamics for $x_1$ and $x_2$, respectively.
    }
    \label{fig:AbumpLinearModels}
\end{figure}

\begin{figure}[h]
    \centering
    \includegraphics[width=0.45\textwidth]{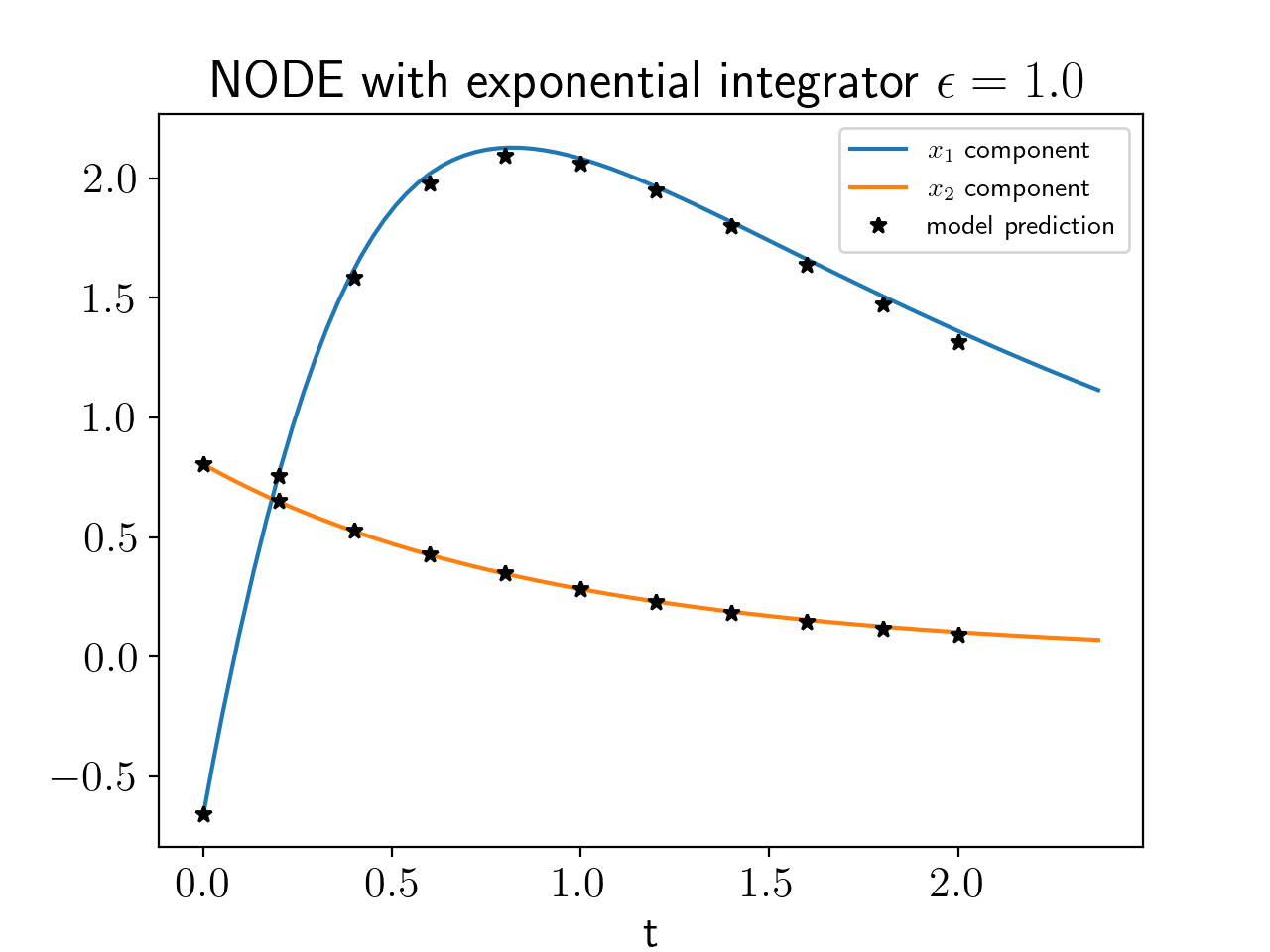}
    \includegraphics[width=0.45\textwidth]{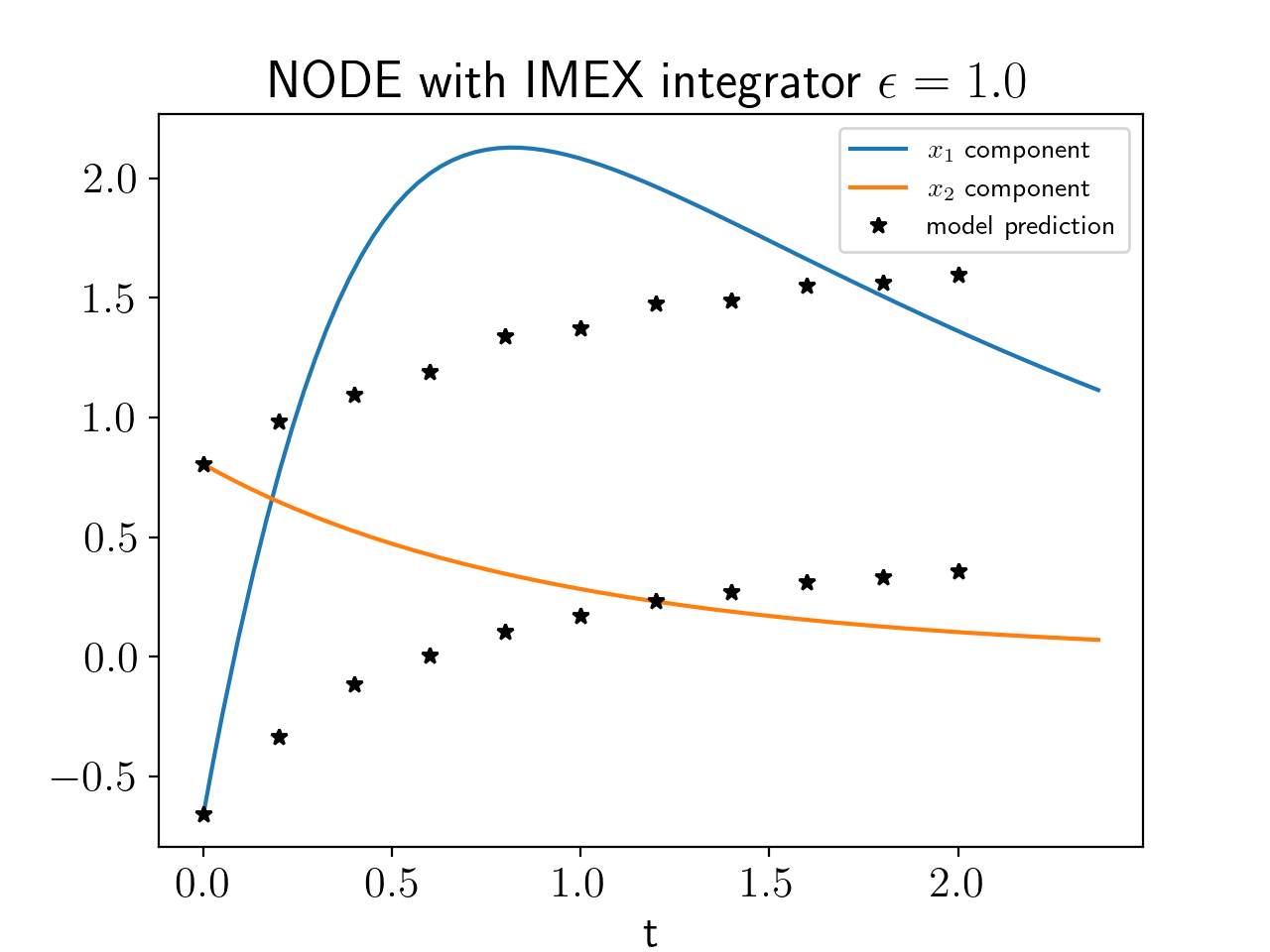}
    \caption{Prediction of the weakly nonlinear system with $\alpha = 1.0$ using an exponential integrator (Left) and an IMEX integrator (Right) using a Lipschitz controlled neural network for the nonlinear term. Black dots represent the model prediction using timesteps of $\Delta t = 0.2$. The blue and orange curves  represents the true dynamics for $x_1$ and $x_2$, respectively.
    }
    \label{fig:AbumpNonLinearModels}
\end{figure}

\subsection{Robertson's Chemical Reaction}\label{sec:Robertson}


The Robertson problem is a stiff three-dimensional system given by 
\begin{align*}
\frac{dy_1}{dt} &= -0.04y_1 + 10^4 y_2y_3,\\
\frac{dy_2}{dt} &=  0.04y_1 -10^4y_2y_3 - (3\times 10^7)y_2^2,\\
\frac{dy_3}{dt} &= (3 \times10^7)y_2^2.
\end{align*}
This system describes the kinetics of an autocatalytic chemical reaction and is a benchmark problem for stiff ODE solvers. 

The training data consists of 1000 trajectories generated using MATLAB's {\tt ODE15s} solver. Each trajectory has initial condition $(1,0,0)$ plus a small perturbation. The timesteps are logarithmically spaced in the interval $[4\times10^{-6},4\times10^6]$ with 50 timesteps. The minimum and maximum timesteps are $3.03\times 10^{-6}$ and $1.72 \times 10^6$, respectively. The model is trained for 10,000 epochs with a learing rate of 0.001 decaying by a factor of 0.99 after each epoch.  In \cite{kim2021stiff} it is shown that training the imbalance in the magnitude of the components can lead to gradient pathologies. The trajectory for $y_2$ is rescaled
to $\hat{y}_2 = 10^4 y_2$ so that inputs and outputs of the network are $O(1)$. To recapture the original system all that is required is scaling $\hat{y_2}$ by $10^{-4}$.

Two NODEs are created to approximate the Robertson problem: one that utilizes the autoencoder and one that does not.  All other model and training parameters remain constant.
The nonlinear networks in the latent dynamics consist of 2 layers and 100 hidden dimensions.
The batch size is chosen to be 100. The learning rate is initially 0.01 and reduced by a factor of 0.99 at each step. The training is run for 10,000 epochs in each model. Lipschitz controlled layers are used to represent the nonlinear operator. In Figure~\ref{fig:RobertsonExample-a}, predictions for both models with initial condition $(1,0,0)$ are displayed. It is clear from this figure that the application of the autoencoder produces an accurate model for all components. 
Here the autoencoder acts like a coordinate transformation. {The latent space dynamics are presented in Figure \ref{fig:RobertsonExample-b}, illustrating how the non-trivial transformation induced by the autoencoder shifts the stiffness of the original system from the nonlinear operator into the linear operator. A non-trivial, stiff, latent dynamics is learned under a large temporal variation.}
We stress that both versions of the model are able to robustly integrate using timesteps ranging from $3.03\times 10^{-6}$ to $1.72 \times 10^6$ without rescaling time.
In both cases, the model is learning to extract a stiff linear operator that enables efficient integration on all scales.



\begin{figure}[htp]
    \centering
    \begin{subfigure}[t]{0.48\textwidth}
        \centering
        \includegraphics[width=\textwidth]{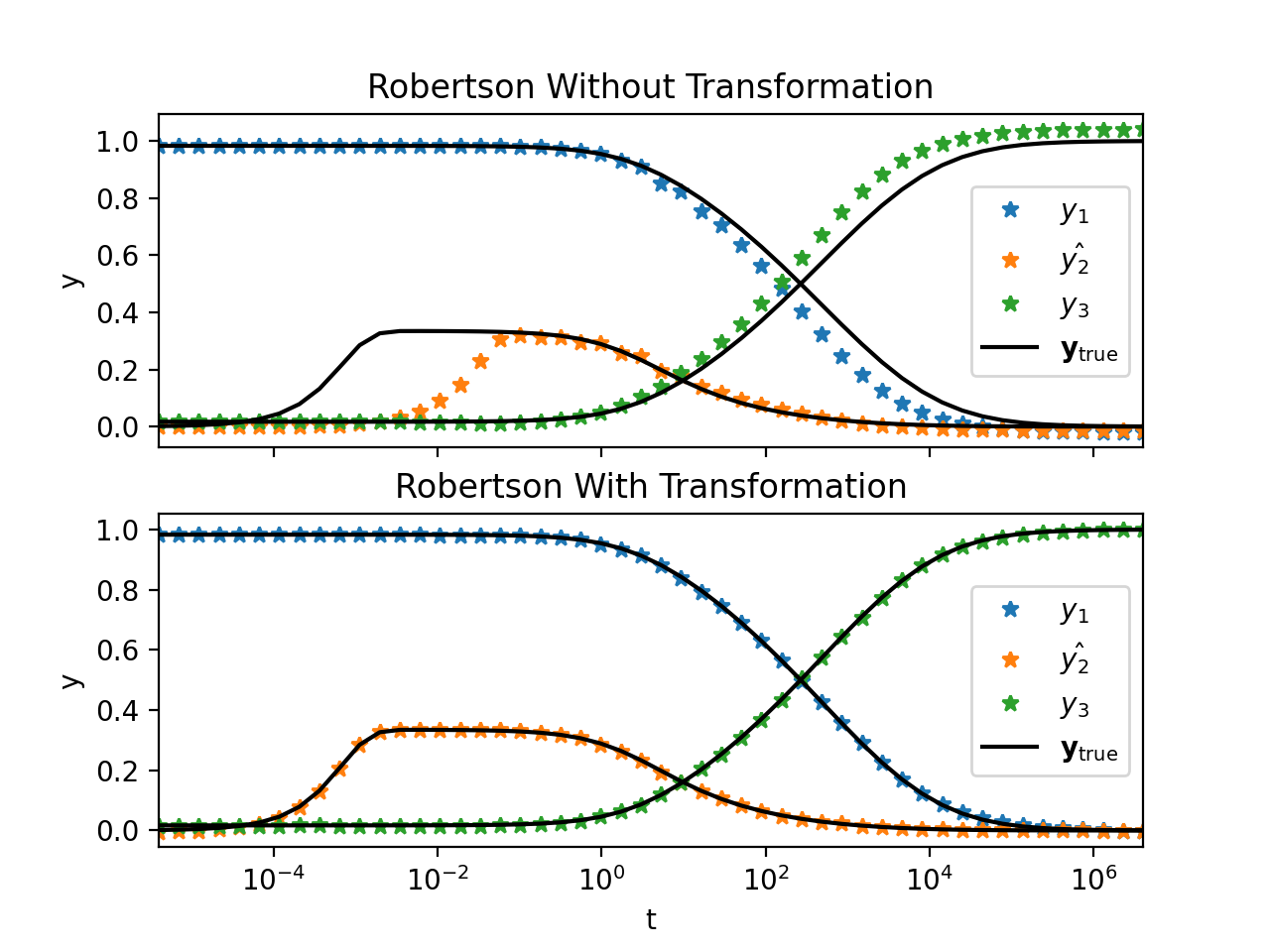}
        \caption{{Top: training without a transformation; Bottom: with an transformation. 
        Dotted lines are the true solution; colored solid lines are model predictions.}}
        \label{fig:RobertsonExample-a}
    \end{subfigure}\hfill
    \begin{subfigure}[t]{0.48\textwidth}
        \centering
        \includegraphics[width=\textwidth]{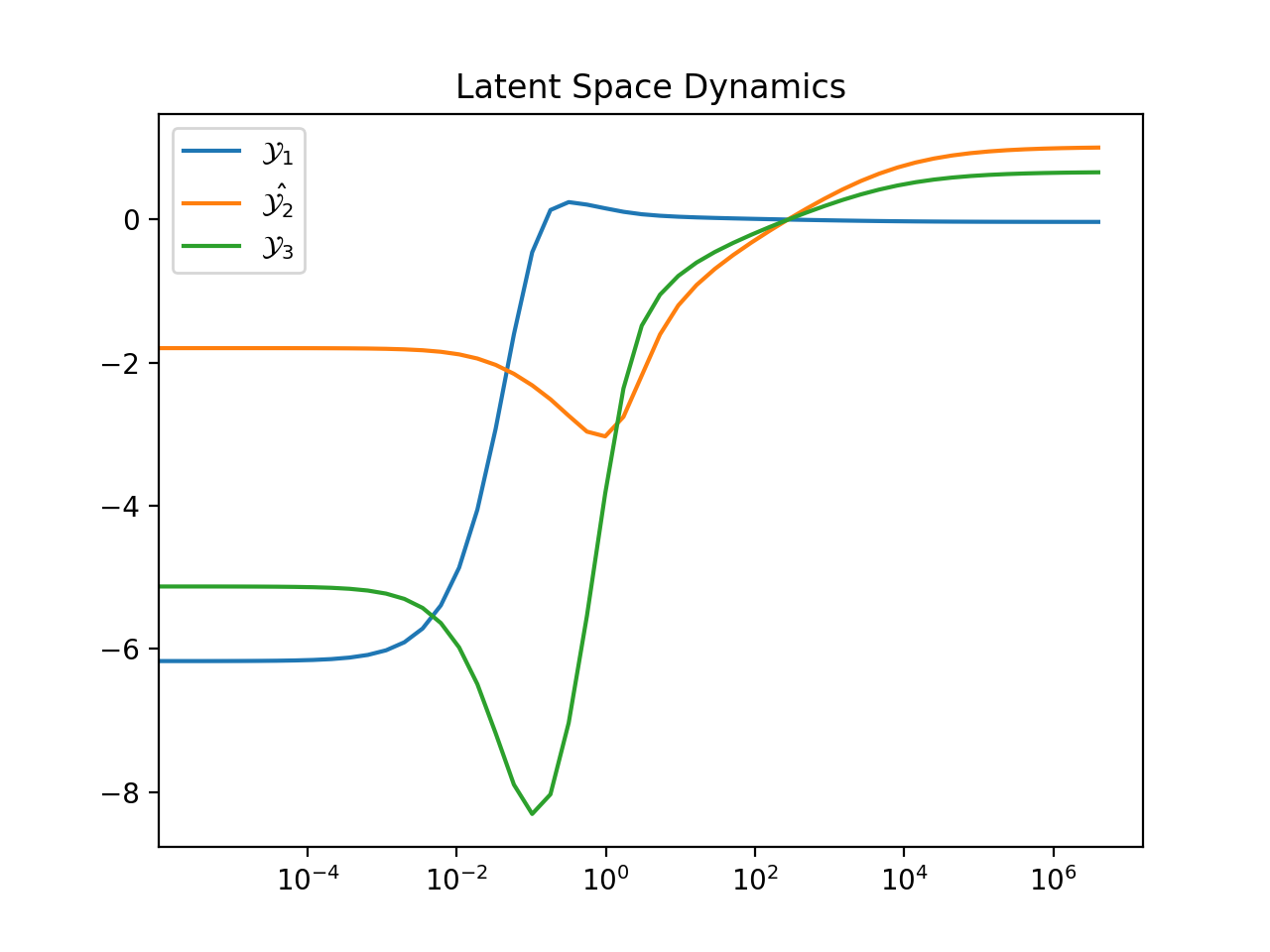}
        \caption{Latent space dynamics for Robertson's chemical reaction.}
        \label{fig:RobertsonExample-b}
    \end{subfigure}
    \caption{{(a) Prediction vs. truth with and without a transformation. (b) Latent dynamics corresponding to the same system when transformation is utilized.}}
    \label{fig:RobertsonCombined}
\end{figure}

Many existing approaches for learning stiff dynamics such as the Robertson problem rely on reparameterizing time and transforming the system into a non-autonomous system.
In \cite{caldana2024neural} the authors resolve the stiffness by learning a reparameterization in time. 
The resulting learned dynamical system is a nonstiff surrogate and may be cheaply evaluated using an explicit integrator. In data driven approaches for stiff dynamics \cite{xie2024latent, nockolds2025constant} a fixed time transformation is used, such as $\Delta \tilde{t} = -\frac{1}{\log{\Delta t}}$.
Our proposed approach avoids the need to know the time scale change a prior or learning the time reparameterization. 
The proposed approach can therefore bridge large variations in time scales--a desirable property for multi-scale problems that are often found in plasma physics and fluid dynamics.
{We observe that our approach performs better than \cite{caldana2024neural} with a much smaller error for the same test, particularly in the initial stage of the dynamics. \cite{caldana2024neural} obtains a worse result for this particular case due to its working on a rescaled dynamics, which does not prioritize the error in the true dynamics.}
The only other approach that we are aware of learning directly in the original time domain is \cite{chen2025due}, in which a multiscale structure is explicitly built in neural ODE. 

Another interesting finding from this example is that the proposed approach enables ETD for a system that might otherwise be considered unsuitable for it.
Specifically, one may not be able to use ETD to integrate the Robertson problem in its present form due to the stiffness being in the nonlinear part. 
Although an exponential integrator allows for larger timesteps than many standard explicit methods, a larger step can be only achieved if the variables are transformed so that the stiffness is transferred to the linear terms. This transformation is learned through data when we use an autoencoder to learn a favorable coordinate system. That is, two neural networks are applied, one to transform into the favorable coordinates for evolution and one to transform back into the original coordinates to compute the loss. 
These transformations along with 
restricting the linear operator to be Hurwitz stable are ingredients that enable robust training and, more importantly, efficient forward predictions for 
a challenging system such as Robertson.

\begin{figure}[ht]
{\color{blue}
    \centering    \includegraphics[width=.5\textwidth]{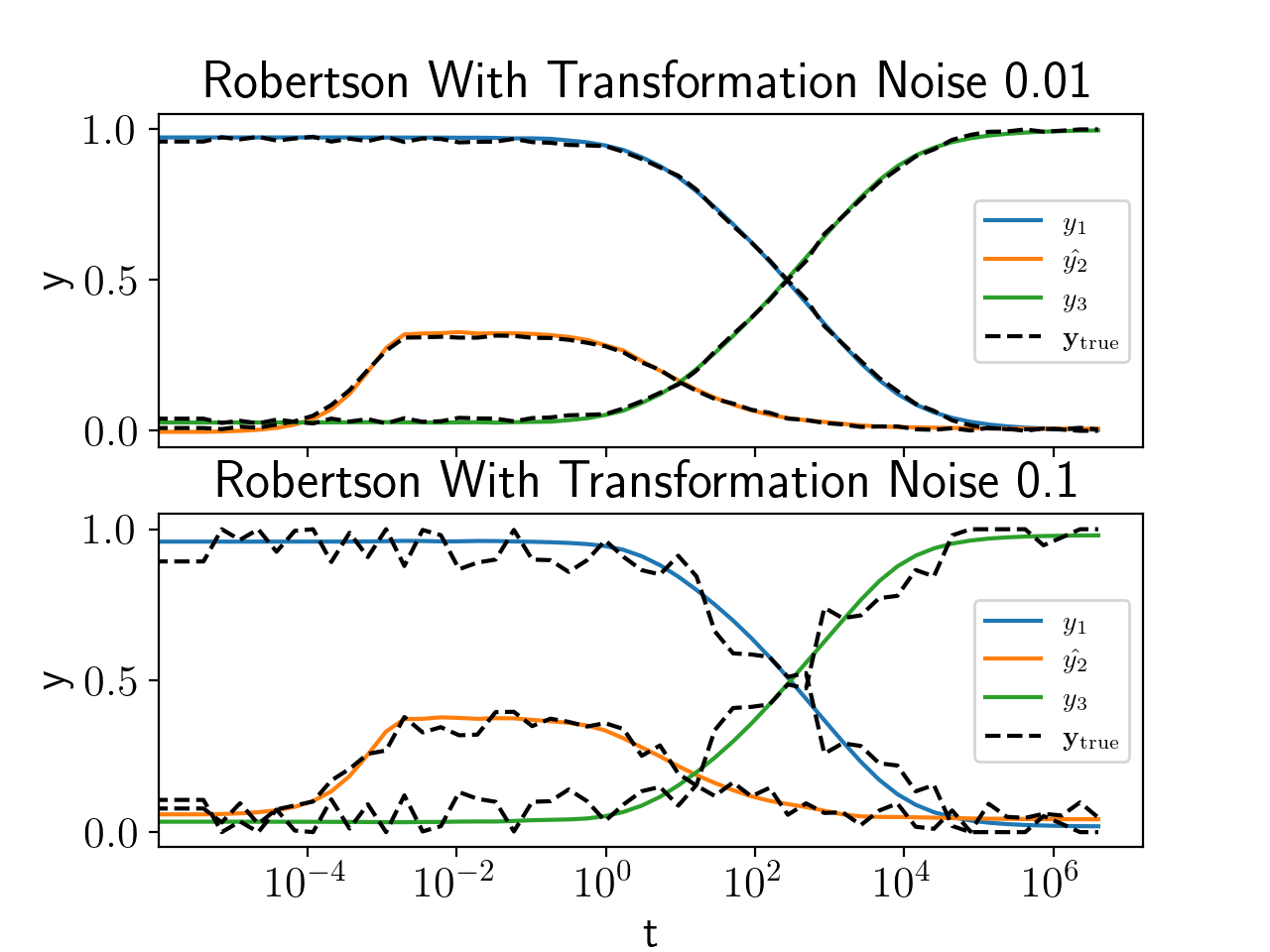}
    \caption{{Model and true trajectories with Gaussian noise scaled by 0.01 (top) and 0.1 (bottom).}}
    \label{fig:robersonNoise}
    }
\end{figure}

\subsubsection{Network Ablation Studies}

{
To evaluate the robustness of the proposed network, we perform two ablation studies: (1) assessing the model’s sensitivity to noisy inputs and (2) analyzing the effect of the Lipschitz constraint on the nonlinear operator.}
{
In the first study, Gaussian noise is added to each trajectory used for training, with noise levels scaled by factors of 0.01 and 0.1. As shown in Figure~\ref{fig:robersonNoise}, the model remains robust across all noise amplitudes, demonstrating strong resilience to perturbations in the training data.}
{
Table~\ref{tab:Lipschitz} presents the results of the second study, which investigates the impact of varying the Lipschitz constant on model performance. 
Three configurations are compared: no constraint, $L=1.0$, and $L=2.0$.
Additionally, the scaling factor applied to the initial weights of the nonlinear function is varied from $10^{-3}$ to $10^0$.
The results indicate that imposing a Lipschitz constraint significantly improves accuracy and stability across all initialization scales. In contrast, models trained without the constraint exhibit less consistent and occasionally unstable behavior. These findings also highlight that the performance remains sensitive to the magnitude of the initial weights, underscoring the importance of proper initialization when applying Lipschitz regularization. In each case the learning rate is initialized at 0.01 and has a decay rate of 0.9. The network has 100 hidden dimensions and 2 layers. A total of 20000 iterations are done and the final loss is reported in Table \ref{tab:Lipschitz}.}

\begin{table}[htb]
\centering
\caption{Effect of Lipschitz control constant and initilzation scale on model performance. 
Values represent the final training loss across different initialization scales.}
\label{tab:Lipschitz}
\vspace{0.5em}
\begin{tabular}{c|ccc}
\hline
\textbf{Init. Scale} & \textbf{No Lipschitz} & \textbf{$L = 1.0$} & \textbf{$L = 2.0$} \\
\hline
$10^{-3}$ & 0.052662 & 0.004060 & 0.004084 \\
$10^{-2}$ & 0.413053 & 0.004100 & 0.004087 \\
$10^{-1}$ & 0.004639 & 0.000201 & 0.000261 \\
$10^{0}$  & 0.023333 & 0.004182 & 0.004027 \\
\hline
\end{tabular}
\end{table}



\subsection{Kuramoto-Sivashinsky Equation}
\label{sec:ksEQN}
In this section we consider the one-dimensional Kuramoto-Sivashinsky (KS) equation 
\begin{equation}
\frac{du}{dt} = - u\frac{du}{dx} - \frac{d^2u}{dx^2} - \frac{d^4u}{dx^4},
\label{eq:ksEQN}
\end{equation}
with periodic boundary conditions on a domain of length $L = 22$. The KS equation exhibits chaotic behavior; therefore, we do not expect pointwise accuracy in the long term. To check the validity of our solution we rely on statistical measures, namely, the probability distribution function of the first and the second spatial derivatives, $u_x$ and $u_{xx}$.

We consider the dataset used in~\cite{linot2023stabilized}. 
Solutions are found by performing a Galerkin projection onto Fourier modes and using exponential time differencing to evolve the ODE forward in time \cite{kassam2005fourth}. 
After solving in Fourier space, we then transform the data back to physical space for training the neural ODEs. 
We sample the data at every 1.0 time units. The training data is constructed by separating the long trajectory into trajectories with 8 steps. Our batch size is 2000 and our learning rate is 0.001. The learning rate is reduced by a factor of 0.99 at each step. For this problem we use a Lipschitz controlled feed-forward neural network 
for the nonlinear part using 2 layers and inner dimension 200. 
We observe that Lipschitz control simultaneously
improves training performance 
and the long-term stability of the dynamic model.

There are a few additional steps taken to train this model effectively. First, a shift of the spectrum  to the Hurwitz matrix is applied during training. This is required since the KS system has both positive and negative eigenvalues and the Hurwitz parametrization requires the real part of the eigenvalues to be negative. Second, an autoencoder that projects the dynamics down to a latent space. There are many attractive options for model reduction. A popular method is utilizing projection based reduced-order modeling, which attempt to find a low-dimensional trial subspace that can represent the state space. Most of these trial subspaces are linear and can not capture the complicated dynamics of the KS equation. Moreover, the KS equation has a slow decaying Kolomogorov $n$-width \cite{mojgani2023kolmogorov}, which implies linear trial subspaces may not accurately represent the system. The power behind autoencoders is that they attempt to find an identity mapping with fewer dimensions. If the data can be represented in a lower dimensional nonlinear manifold, then optimization of the encoder and decoder
finds the optimal low-order representation. For this example 64 grid points are projected into a latent space of dimension 20 using an autoencoder. For $L=22$, the KS equation has 3 positive Lyapunov exponents, resulting in the Kaplan-York dimension of its attractor to be 5.198 \cite{edson2019lyapunov}. Therefore, it needs an inertial manifold of at least 6 dimensions. Our second model utilizes a latent space of 7 dimensions. 
This finding is consistent with the previous work \cite{linot2020deep}, where the authors found that for the exact same KS test, the minimal latent dimension for their model to train reasonably is 7.
We note that relative to our baseline architecture without an autoencoder, we observe factors of 12 and 24 speedup for the models with dimension 20 and 7, respectively. 

\begin{figure}[ht]
    \centering    \includegraphics[width=1.0\textwidth]{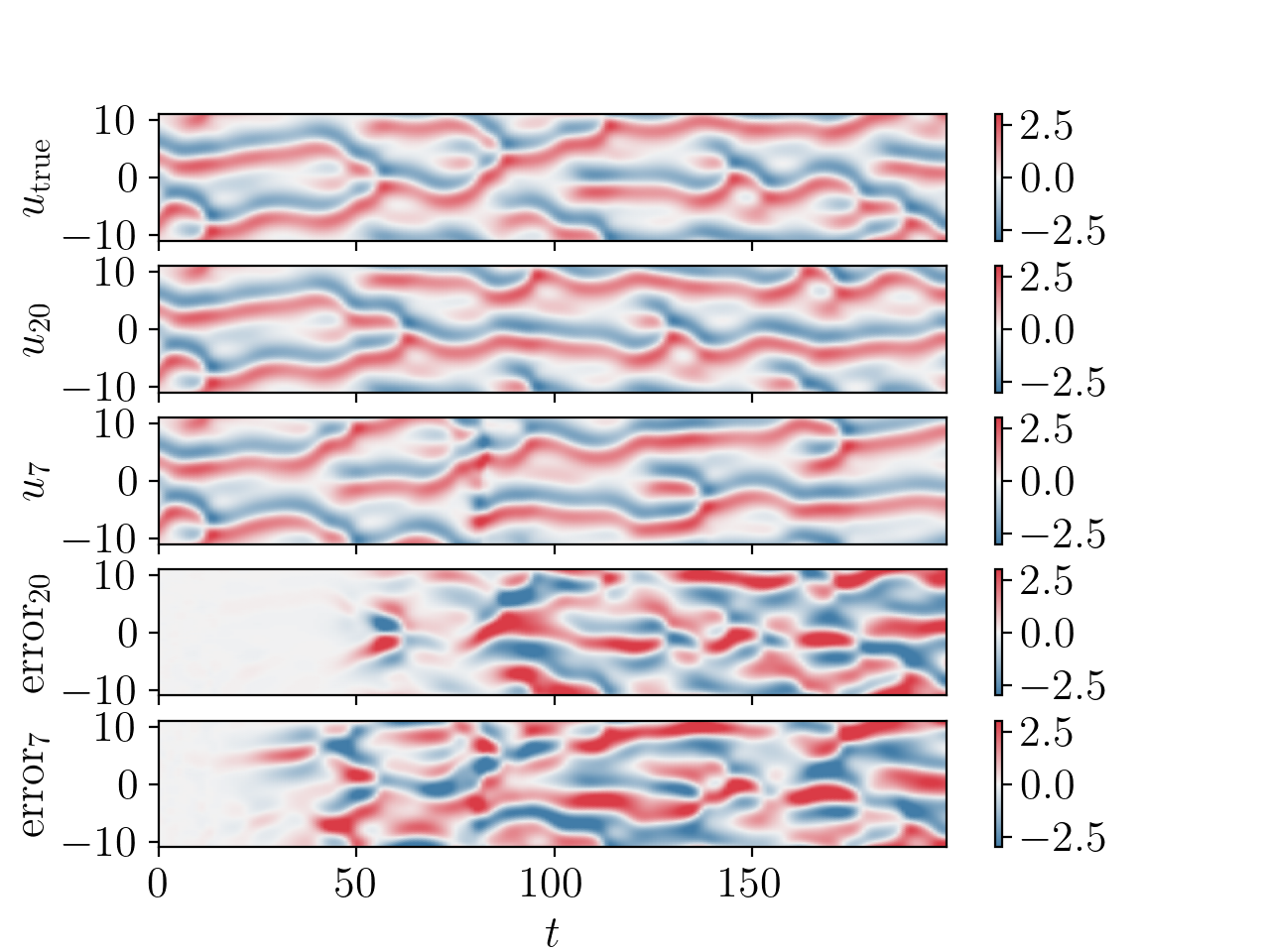}
    \caption{Displayed is the true solution ($u_{true}$), the model prediction with latent dimension 20 and 7 ($u_{20}$,$u_{7}$) and the errors ($\rm{error}_{20},\rm{error}_{7})$ .}
    \label{fig:ksResults}
\end{figure}

In Figure \ref{fig:ksResults} the evolution of the true solution ($U_{\text{true}}$), latent space dimension 20 model and latent space dimension 7 model  ($U_{20},U_7$), and the errors ($\text{error}_{20},\text{error}_7$) are displayed until time $t = 200$. The dynamics agree pointwise until around time $t = 50$ for latent space dimension 20 and until time $t = 25$ for latent space dimension 7. After the time of pointwise agreement we must rely on other measures to assess the validity of the model. 

\begin{figure}[htp]
    \centering
    \includegraphics[width=0.32\textwidth]{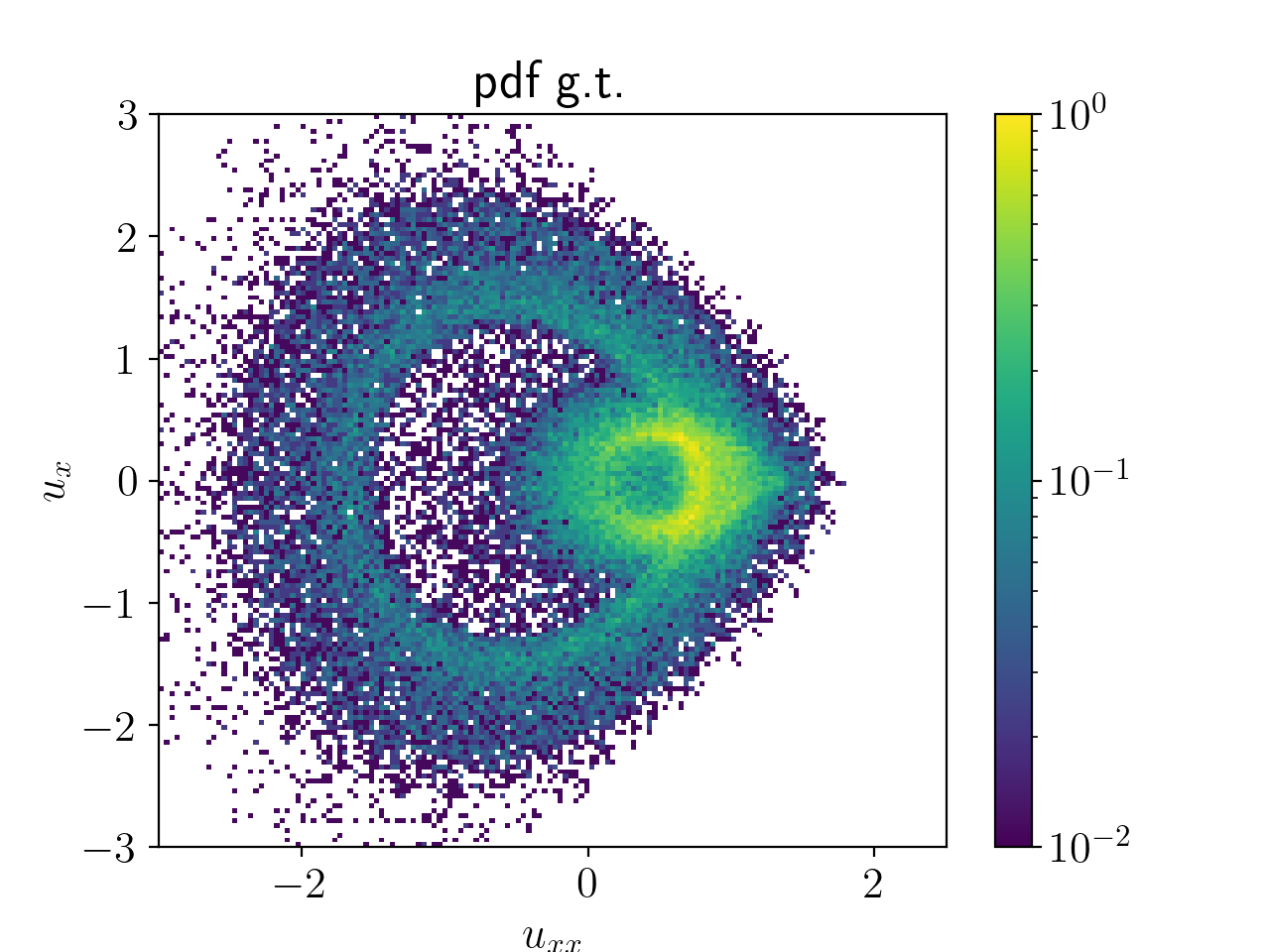}
    \includegraphics[width=0.32\textwidth]{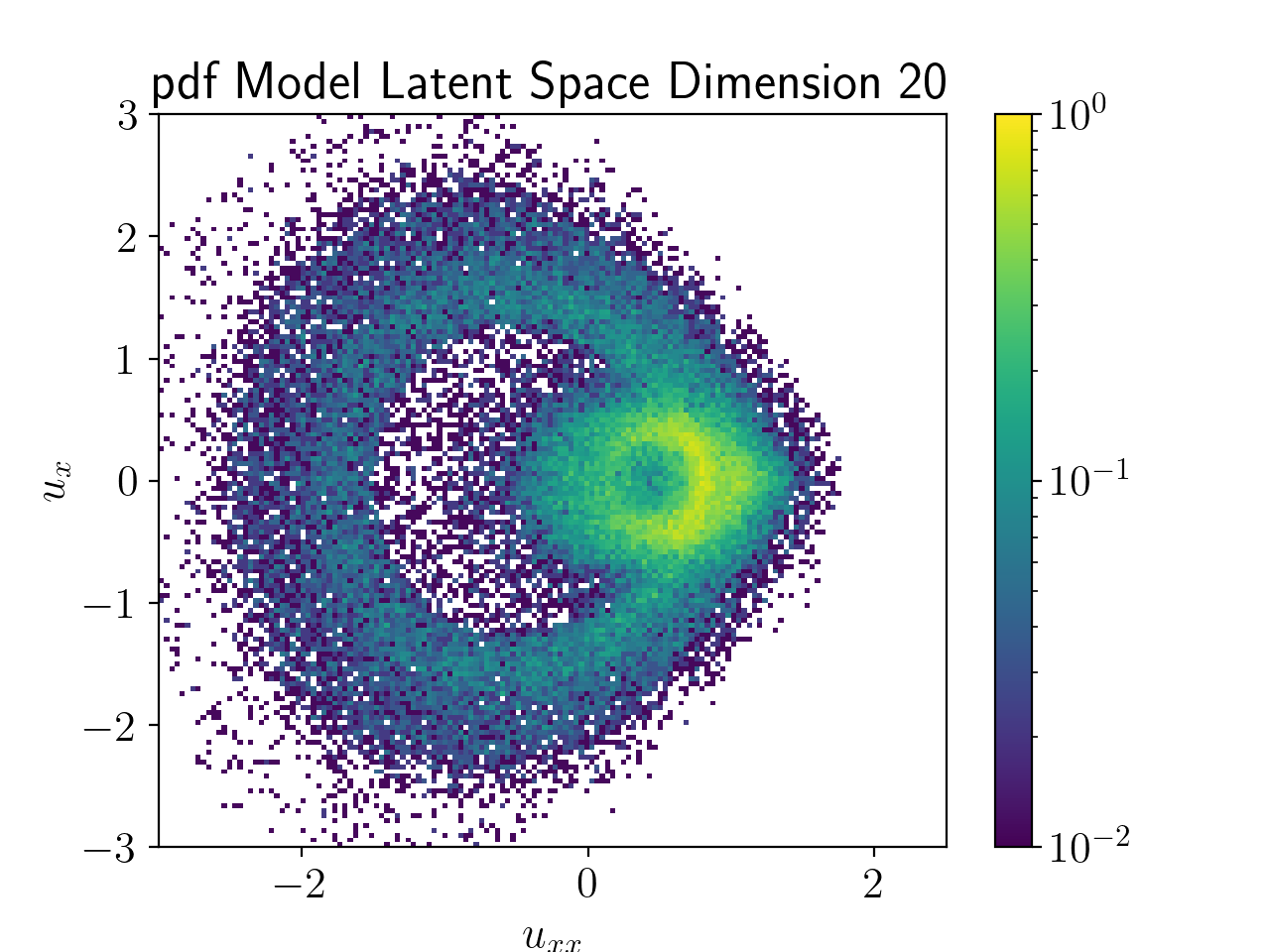}
    \includegraphics[width=0.32\textwidth]{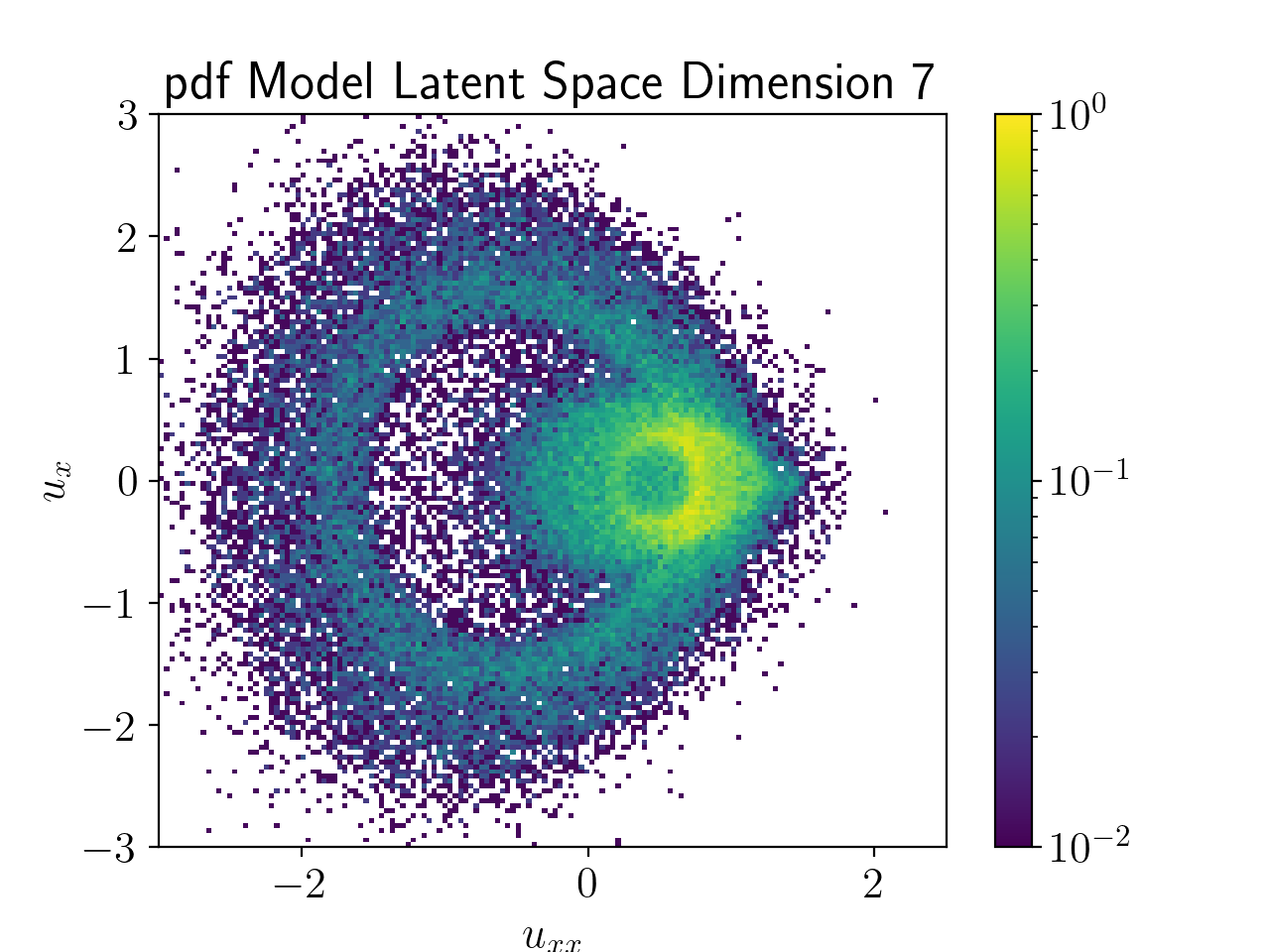}
    \caption{The joint probability distribution function of $u_x$ and $u_{xx}$ is displayed for the ground truth (Left) and the model with latent dimension 20 (Middle) and model with latent dimension 7 (Right).}
    \label{fig:ksResultsPDF}
\end{figure}

To verify the long-time behavior of predicting this chaotic system, Figure \ref{fig:ksResultsPDF} presents the joint probability distribution function of $u_x$ and $u_{xx}$. Qualitatively they match well demonstrating that the statistics of the dynamics agree between both models and the ground truth.

\section{Discussions}

A critical component of solving high-dimensional problems is dimensionality reduction. We pursue this through an autoencoder that compresses the state variables into a latent representation whose evolution satisfies an ordinary differential equation. Our approach can be viewed as an extension of latent neural ODEs~\cite{rubanova2019latent}, but with additional structure-preserving mechanisms built into the latent dynamics.
Latent neural ODEs learn a low-dimensional latent space in which continuous-time dynamics evolve, enabling the modeling of irregularly sampled or partially observed trajectories. Our framework extends this idea by introducing Hurwitz-stable linear operators, Lipschitz-controlled nonlinear components, and exponential integration, ensuring stable and efficient learning even for stiff or multiscale systems. This structure makes the method particularly suitable for physical systems where long-term accuracy and numerical robustness are essential.
Beyond latent ODEs, there are other extensions of the neural ODE framework designed to enhance representational capacity. Augmented neural ODEs~\cite{dupont2019} expand the state space with additional coordinates, allowing the model to represent more complex trajectories and overcome topological limitations of standard NODEs. In our examples, which involve first-order systems solved directly for the full state variable, augmentation is not physically motivated. However, the principles of exponential integration, Hurwitz stability, and Lipschitz control can naturally extend to augmented formulations. 
For PDEs in multiple spatial dimensions, the number of degrees of freedom in the discretization grows rapidly with the spatial dimension. To address such large-scale problems, it may be necessary to employ sparse or localized operators, such as convolutional, patch-based, or graph-based embeddings, to perform efficient coarse-graining before applying the learned structure-preserving neural ODE. Extending these ideas to high-dimensional PDEs represents an important step toward modeling realistic multi-scale physical phenomena, including fluid turbulence, plasma dynamics, and climate processes.

The proposed framework shows potential for extension to more complex dynamical systems beyond the examples considered in this work. In particular, it can naturally be applied to multi-scale systems, where components evolve on widely separated time scales. In our recent work on fast-slow dynamical systems~\cite{serino2024intelligent}, we demonstrated that a structure-preserving formulation can accurately capture the fast, dissipative dynamics through a linear-nonlinear split while maintaining stability and efficiency. The present approach builds on these ideas, and can be directly applied to the fast subsystem to achieve stable integration across multiple scales, while future work will target systems that exhibit strongly oscillatory or resonant behavior.
For non-smooth systems, such as those involving shocks, contact discontinuities, or material interfaces, the performance of the method will depend primarily on the spatial discretization employed. Nevertheless, the exponential time-stepper introduced in this paper can be readily combined with shock-capturing or discontinuity-aware spatial schemes, enabling robust integration even in the presence of non-smooth features.
We also note that hybrid dynamical systems, which involve discrete-continuous interactions or state-dependent switching, fall outside the continuous ODE framework considered here. Extending the current formulation to handle event-driven updates or discontinuous vector fields would require additional mechanisms, such as switching-aware integration or piecewise structure-preserving embeddings. Addressing these challenges represents a compelling direction for future research toward applying structure-preserving neural ODEs to a broader class of real-world, multi-physics problems.

\section{Conclusions}
\label{sec:conclusions}
This work presents a novel approach to addressing the long-term stability issues of neural ordinary differential equations (NODEs), a crucial aspect for efficient and robust modeling of dynamical systems. By combining a structure-preserving NODE with a linear and nonlinear split, an exponential integrator, constraints on the linear operator through Hurwitz matrix decomposition, and a Lipschitz-controlled neural network for the nonlinear 
operator,
the proposed approach demonstrates significant advantages in both learning and deployment over standard explicit and implicit NODE methods. This approach enables the efficient modeling of complex stiff systems and provides a stable foundation for deploying NODE models in real-world applications.
We demonstrate our approach in various examples. 
For the weakly nonlinear ODE example, we show that using an exponential integrator provides significant accuracy over a semi-implicit scheme.
We also demonstrate the structure-preserving NODE approach using stiff ODEs of the Robertson chemical reaction problem and PDEs of the Kuramoto-Sivashinksy system,
showcasing its ability to learn both multi-scale and 
chaotic systems.

\das{
While our experiments focus on canonical stiff systems, the proposed framework is general and can be applied to other domains. In particular, neural ODEs trained on image, speech, or sequential data may exhibit effective stiffness during optimization, making our structure-preserving approach potentially beneficial for broader deep learning tasks.
We plan to investigate this connection in future work by evaluating the method on standard benchmarks where stiffness may arise implicitly during training.
Beyond deep learning, the framework can also be extended to real-world scientific and engineering problems such as collisional-radiative modeling, climate and geophysical forecasting, and computational fluid dynamics, where stiffness and multi-scale interactions are prevalent. These applications represent natural testbeds for assessing the scalability, stability, and interpretability advantages of structure-preserving neural ODEs in realistic, high-dimensional environments.
}

\section*{Acknowledgments}
This research used resources provided by the National Energy Research Scientific Computing Center (NERSC), a U.S. Department of Energy Office of Science User Facility located at Lawrence Berkeley National Laboratory, operated under Contract No.~DE-AC02-05CH11231 using NERSC award ASCR-ERCAP0023112.
We thank Prof.~A.~J.~Roberts from  University of Adelaide on the suggestion related to the last example training.

\section*{Data Availability}

{The code and datasets analyzed during the current study are available from the corresponding author on reasonable request.}

\bibliographystyle{elsarticle-num}
\bibliography{references}
\end{document}